\theoremstyle{plain}
 \newtheorem{prop}{Proposition}[section]
\theoremstyle{definition}
 \newtheorem{exm}{Example}[section]
 \newtheorem{rem}{Remark}[section]
 \newtheorem{dfn}{Definition}[section]
\numberwithin{equation}{section}
\renewcommand{\le}{\leqslant}\renewcommand{\leq}{\leqslant}
\renewcommand{\ge}{\geqslant}\renewcommand{\geq}{\geqslant}
\renewcommand{\setminus}{\smallsetminus}
\newcommand{\bfE}{\mathbf{E}}
\newcommand{\h}{h(t)}
\newcommand{\R}{\mathbb{R}}
\newcommand{\M}{\mathbb{M}}
\newcommand{\Prob}{\mathbf{P}}
\newcommand{\E}{\mathbb{E}}
\newcommand{\vague}{\stackrel{\lower0.2ex\hbox{$\scriptscriptstyle
                    \it{v} $}}{\rightarrow}}
\newcommand{\weak}{\stackrel{\lower0.2ex\hbox{$\scriptscriptstyle
                    \it{w} $}}{\rightarrow}}
\newcommand{\what}{\stackrel{\lower0.2ex\hbox{$\scriptscriptstyle
                    \it{\hat{w}} $}}{\rightarrow}}
\newcommand{\eqdis}{\stackrel{\lower0.2ex\hbox{$\scriptscriptstyle
                    \mathrm{d}$}}{=}}
\newcommand{\distr}{\stackrel{\lower0.2ex\hbox{$\scriptscriptstyle
                    \it{d} $}}{\rightarrow}}
\def\bX{\boldsymbol X}
\def\bY{\boldsymbol Y}
\def\bV{\boldsymbol V}
\def\bZ{\boldsymbol Z}
\def\bC{\mathbb{C}}
\def\bone{\boldsymbol 1}
\def\bzero{\boldsymbol 0}
\def\bx{\boldsymbol x}
\def\bz{\boldsymbol z}
\def\by{\boldsymbol y}
\def\binfty{\boldsymbol \infty}
\def\bTheta{\boldsymbol \Theta}
\def\bbeta{\boldsymbol \beta}
\def\gpolar{\text{GPOLAR}}
\def\axes{\text{axes}}
\newcommand\independent{\protect\mathpalette{\protect\independenT}{\perp}}
\def\independenT#1#2{\mathrel{\rlap{$#1#2$}\mkern2mu{#1#2}}}
\allowdisplaybreaks \setlength{\textwidth}{6.5in}
\numberwithin{equation}{section}
\def\Hillish{\text{Hillish}}
\def\CEV{\text{CEV}}
\def\MRV{\text{MRV}}
\def\HRV{\text{HRV}}
\newcommand{\cinP}{\stackrel{\lower0.2ex\hbox{$\scriptscriptstyle
                    \it{P} $}}{\rightarrow}}
\def\Pick{\text{Pickandsish}}
\def\bbeta{\mathbb{\eta}}
\def\bbxi{\mathbb{\xi}}
\newcommand\inv{^\leftarrow}
\title[Generating MRV and HRV]{{Models with Hidden Regular Variation:\\ generation and detection}}
\subjclass[2010]{28A33,60G17,60G51,60G70}
\keywords{regular variation, multivariate heavy tails, hidden regular
  variation, tail estimation, conditional extreme value model}
\author[Das]{Bikramjit Das}
\address{
Bikramjit Das\\ESD  \\ 
Singapore University of Technology and Design,
Singapore}
\email{bikram@sutd.edu.sg}
\author[Resnick ]{Sidney I. Resnick}
\address{Sidney I. Resnick\\School of ORIE, Cornell University,
Ithaca, NY 14853 USA} \email{sir1@cornell.edu}
\thanks{B. Das was supported by SRG-ESD-2012-047. S. Resnick was supported by Army MURI grant
  W911NF-12-1-0385 to Cornell University. Resnick acknowledges
  hospitality, space and support from SUTD during a visit January 2014.} 
\begin{document}

\setcounter{page}{1}
\thispagestyle{empty}

\begin{abstract}
We review definitions of multivariate regular variation (MRV) and hidden
regular variation (HRV) for distributions of random vectors and then
summarize methods for generating models exhibiting both properties. We
also discuss diagnostic techniques that detect these properties in
multivariate data and indicate when  models exhibiting both  MRV and
HRV are plausible fits for the data. We illustrate our techniques on
simulated data and also two real Internet data sets.
\end{abstract}
\bibliographystyle{plainnat}
\maketitle

\section{Introduction}
{Data exhibiting heavy tails appear naturally in many contexts, for
  example hydrology \cite{anderson:meerschaert:1998}, finance
  \cite{smith:2003}, insurance
  \cite{embrechts:kluppelberg:mikosch:1997}, Internet traffic and
  telecommunication \cite{crovella:bestavros:taqqu:1998} and risk
  assessment \cite{das:embrechts:fasen:2013,ibragimov:jaffee:walden:2011}.
  Often the
observed  data  are multi-dimensional  with heavy tailed marginal
distributions and come from complex systems and we must study the
dependence structure among the
components. 

The study of multivariate heavy-tailed models is facilitated by
the ability to  generate such models. Moreover, a
  generation technique helps in stress-testing  worst-case
  scenarios. In the first part of this paper we consider
  several   generation techniques and discuss their strengths and
  weaknesses. 

A second theme of this paper is the development of diagnostics for
detecting and  identifying multivariate heavy tailed models prior to 
estimating model parameters. The second part of this
  paper deals with this.} 

{
\subsection{Outline}\label{subsec:outline}
The mathematical framework for the study of multivariate heavy tails
is regular variation of measures.
We provide a careful review of the
definitions of multivariate regular variation (MRV) and hidden regular
variation (HRV) in Section \ref{subsec:regVarMod} and list the
notations we use in Section \ref{subsec:notation}.  In Section
\ref{sec:genMRV} we discuss methods for generating regularly varying
models on $\E=[0,\infty)^2\setminus \{(0,0)\}$ and $\E_0=(0,\infty)^2$
when  the asymptotic limit measures are specified. The described methods  are
relatively easy to implement.

 In Section \ref{sec:genMRVHRV} we
discuss how to create models that 
exhibit both MRV and HRV. Both MRV and HRV are asymptotic
models with curious properties which are often ignored or
misinterpreted when attempting to generate finite samples exhibiting
such properties.  
We review three model generation methods that yield the
asymptotic properties of both MRV on $\E$ and HRV  on $\E_0$
and discuss characteristics of each method. These methods}
are called  (i) the mixture 
method, (ii) the multiplication method and (iii) the additive method.
We give particular attention to the recently
proposed additive generation method of \cite{weller:cooley:2013} and
show that there are identifiability issues in the sense that asymptotic
parameters may not be coming from the anticipated summand of the representation.
Accompanying simulation examples illustrate our discussion. 

 Section \ref{sec:detect} gives techniques for
detecting when data is consistent with a model exhibiting MRV and
HRV. These techniques rely on the fact that under 
broad conditions, if  a vector $\bX$ has a  multivariate
 regularly varying distribution on a cone $\mathbb{C}$, then under a
{\it generalized polar coordinate transformation\/} (see
\eqref{eq:defgpolar}), the transformed 
vector satisfies a conditional extreme value (CEV) model for which 
detection techniques exist from \cite{das:resnick:2011b}.
This methodology goes beyond 
  one dimensional techniques such as checking one
  dimensional marginal distributions are heavy tailed
  or checking one dimensional functions of the data vector such as maximum and minimum
component are heavy tailed.  

In Section
  \ref{sec:detect:data}, we give two examples of our detection and
  model estimation techniques applied to
Internet downloads and HTTP response data.

\subsection{Regularly varying distributions on cones.}\label{subsec:regVarMod}
We review material from \cite{hult:lindskog:2006a, das:mitra:resnick:2013,
  lindskog:resnick:roy:2013} describing the framework for the
definition of MRV and HRV and then specialize to two dimensions.

Let $\mathbb{X}$ be a metric space with metric $d(\bx,\by)$ satisfying
\begin{equation}\label{eq:Metricscale}
d(c\bx, c\by)=cd(\bx,\by), \qquad  c>0,  (\bx,\by) \in \mathbb{X} \times \mathbb{X}.
\end{equation}
If $d(\cdot,\cdot)$ is defined by a norm, \eqref{eq:Metricscale} is
satisfied. Hence in finite dimensional Euclidean space,
\eqref{eq:Metricscale} can always be satisfied.
A flexible framework for discussing regular variation is measure
convergence {defined by} $\M$-convergence
\cite{lindskog:resnick:roy:2013, das:mitra:resnick:2013}) on a closed
cone $\mathbb{C}\subset \mathbb{X}$ with a closed cone 
$\mathbb{C}_0 \subset  \mathbb{C}$ deleted. The {concept} of a cone
requires specifying a definition of scalar multiplication $(c,\bx) \mapsto c\bx$
from $\mathbb{R}_+\times \mathbb{X} \mapsto \mathbb{X}$. 
In this paper, the metric space is
 Euclidean and scalar multiplication is the usual one.
A cone $\mathbb{C} $ is closed under
scalar multiplication: If $\bx \in
\mathbb{C}$ then $c\bx \in \mathbb{C}$ for $c>0$.
A subset $\Lambda \subset \mathbb{C} \setminus \mathbb{C}_0$ is {\it
  bounded away from\/} $\mathbb{C}_0$ if $d(\Lambda,\mathbb{C}_0)>0$.
 The two cases of most interest
are
\begin{enumerate}
\item $\mathbb{C} = \mathbb{R}_+^2$
 and $\mathbb{C}_0 =\{\bzero\}$. Then $\E:=\mathbb{C} \setminus
 \mathbb{C}_0
= \mathbb{R}_+^2 \setminus \{\bzero\}$ is the space for defining
$\M$-convergence appropriate for regular variation of distributions of
positive random vectors.

\item $\mathbb{C} = \mathbb{R}_+^2$
 and $\mathbb{C}_0 =\{\bx: \wedge_{i=1}^2  x_i =0\}:=[\text{axes}]$. Then $\E_0:=\mathbb{C} \setminus
 \mathbb{C}_0$, the first quadrant without its axes, is the space for
 defining $\M$-convergence appropriate for HRV. 
\end{enumerate}

A random vector $\bZ\geq \bzero$ is regularly varying on $\mathbb{C}
\setminus \mathbb{C}_0$ if there exists a regularly varying function
$b(t) \in RV_{1/\alpha}$, $\alpha >0$ called the {\it scaling
  function\/} and a measure $\nu(\cdot) 
\in \M(\mathbb{C} 
\setminus \mathbb{C}_0)$ called the {\it limit or tail measure\/}
{such that} as $t \to\infty$,
\begin{equation}\label{eq:RegVarMeas}
t\Prob[ \bZ/b(t) \in \cdot \,] \to \nu(\cdot),
\end{equation}
in $\M(\mathbb{C}
\setminus \mathbb{C}_0)$, the set of measures on $\mathbb{C}
\setminus \mathbb{C}_0$ which are finite on sets bounded away from
$\mathbb{C}_0$ \citep{das:mitra:resnick:2013,
  hult:lindskog:2006a,lindskog:resnick:roy:2013}. 
We write $\bZ \in MRV(\alpha, b(t), \nu, \mathbb{C}
\setminus \mathbb{C}_0)$.
Since $b(t) \in RV_{1/\alpha}$, 
$\nu(\cdot) $ has a scaling property 
\begin{equation}\label{eq:limMeasScales}
\nu(c \cdot) =c^{-\alpha} \nu(\cdot),\qquad c>0.
\end{equation}
When $\mathbb{C}=\mathbb{R}_+^2$, $\mathbb{C}_0=\{\bzero\}$ and
$\nu$ satisfies $\nu(\bx,\binfty)=0$ for all $\bx>\bzero$ so that
$\nu$ concentrates on the axes, we say $\bZ$ possesses {\it asymptotic
  independence\/} \cite{resnickbook:2007, dehaan:ferreira:2006, resnickbook:2008}.
It is convenient to translate \eqref{eq:RegVarMeas} and
\eqref{eq:limMeasScales} using generalized polar coordinates
\citep{lindskog:resnick:roy:2013, das:mitra:resnick:2013}. Set 
$\aleph_{\mathbb{C}_0}=\{\bx \in \mathbb{C} \setminus
\mathbb{C}_0: d(\bx, \mathbb{C}_0)=1\},$
 the locus of points at distance 1 from the deleted region
$\mathbb{C}_0$. Define $\gpolar: \mathbb{C}\setminus \mathbb{C}_0 \mapsto (0,\infty)\times
\aleph_{\mathbb{C}_0}$ by
\begin{equation}\label{eq:defgpolar}
\gpolar (\bx) =\Bigl(d(\bx,\mathbb{C}_0) , \frac{\bx}
{d(\bx,\mathbb{C}_0)}\Bigr)
\end{equation}
Then (\cite{das:mitra:resnick:2013, lindskog:resnick:roy:2013})
\eqref{eq:RegVarMeas} and
\eqref{eq:limMeasScales} 
are equivalent to  
\begin{equation}\label{eq:limMeasPolar}
t\Prob[\gpolar (\bZ)/b(t) \in \cdot \,] \to \nu_\alpha  \times
S(\cdot)  { = \nu\circ\gpolar^{-1}},
\end{equation}
in $\M\bigl((0,\infty) \times \aleph_{\mathbb{C}_0} \bigr)$
where $\nu_\alpha (x, \infty) =x^{-\alpha}, \,x>0,\,\alpha>0$
and $S(\cdot) $ is a probability measure on $\aleph_{\mathbb{C}_0}
$. {One should note that the transformation $\gpolar$ depends on the
  cone $\bC_0$; this dependence should be understood from the context.} 

We focus on regular variation for $p=2$ and the two choices of
$\mathbb{C}$ and $\mathbb{C}_0$ which yield the spaces
\begin{enumerate}
\item $\E : =\mathbb{R}_+^2 \setminus \{\bzero\}$.
\item $\E_0=\mathbb{R}_+^2 \setminus \{\bx : x_1\wedge
  x_2=0\}=:\mathbb{R}_+^2 \setminus [\text{axes}].$
\end{enumerate}
Then $
\bZ$ is regularly varying on $
\E$ and has {\it hidden regular variation\/} (HRV) on $\E_0$ if there exist
$0<\alpha \leq \alpha_0$, scaling functions $b(t) \in
RV_{1/\alpha}$ and $b_0 \in RV_{1/\alpha_0}$ with $b(t)/b_0(t) \to
\infty$ and limit measures $\nu,\, \nu_0$ such that $$\bZ\in
\MRV(\alpha, b(t), \nu, \E) \cap \MRV(\alpha_0, b_0(t), \nu_0, \E_0)$$
so that unpacking the notation we get,
\begin{equation}\label{eq:regVarE}
t\Prob[\bZ / b(t) \in \cdot \,]\to \nu (\cdot), \quad \text{ in }\M(\E)
\end{equation}
and
\begin{equation}\label{eq:regVarE0}
t\Prob[\bZ / b_0(t) \in
 \cdot \,]\to \nu_0 (\cdot), \quad \text{ in
} \M(\E_0).
\end{equation}
 On $\E$ we may take
$\aleph_{\bzero}=\{\bx: \|\bx\|=1\}$ for a convenient choice of
  $d(\bx,\by)=\|\bx -\by\|$ and on $\E_0$,
  $$\aleph_{[\axes]}:=\{\bx \in \E: x_1\wedge x_2=1\}$$ is the
  appropriate unit sphere. 
 Then using $\gpolar$ \eqref{eq:regVarE} and
  \eqref{eq:regVarE0} become,
\begin{equation}\label{eq:regVarEPolar}
t\Prob\bigl[\bigl(\|\bZ \|/ b(t), \bZ/\|\bZ\|\bigr) \in \cdot \,\bigr]\to
\nu_\alpha  \times S (\cdot), \quad \text{ in } \M((0,\infty)\times
\aleph_{\bzero  })
\end{equation}
and
\begin{equation}\label{eq:regVarE0Polar}
t\Prob\Bigl[ 
\Bigl(\frac{Z_1\wedge Z_2 }{b_0(t)} , \frac{\bZ}{Z_1\wedge Z_2} \Bigr) \in \cdot \Bigr]
\to 
\nu_{\alpha_0} \times S_0 (\cdot) \qquad \text{ in }
\M\bigl((0,\infty)\times \aleph_{[\axes]}\bigr)
\end{equation}
and $S$ and $S_0$ are probability measures on $\aleph_{\bzero}$ and
  $\aleph_{[\axes]}$ respectively. Note
$$\Bigl(\frac{\bz}{z_1\wedge z_2} \Bigr) =\begin{cases}
(1,z_2/z_1),& \text{ if }z_1\leq z_2,\\
(z_1/z_2,1),& \text{ if }z_2<z_1
\end{cases} $$
and 
$$\aleph_{[\axes]}= \bigl([1,\infty)\times
\{1\}\bigr) \cup \bigl(\{1\}\times  [1,\infty)\bigr).$$
So we may rewrite \eqref{eq:regVarE0Polar} as two statements: For
$x\geq 1$,
\begin{align}
&t\Prob[ \frac{Z_1}{b_0(t)} >r, \frac{Z_2}{Z_1}>x] \to r^{-\alpha_0}
S_0\{(1,z):z>x\}=:r^{-\alpha_0}p\bar G_1 (x),\label{eq:G1}\\
&t\Prob[ \frac{Z_2}{b_0(t)} >r, \frac{Z_1}{Z_2}>x]\to r^{-\alpha_0}
S_0\{(z,1):z>x\}=:r^{-\alpha_0}q\bar G_2 (x),\label{eq:G2}
\end{align}
where {$p:=S_0 \{\{1\}\times [1,\infty)\},$     
$q:=S_0 \{[1,\infty) \times \{1\} \}=1-p$} and $G_1,G_2$ are  probability
distributions on $[1,\infty)$.  We also have
\begin{align}
&t\Prob[ \frac{Z_1\wedge Z_2}{b_0(t)} >r, \left(\frac{Z_1}{Z_2}
  \bigvee \frac{Z_2}{Z_1}\right)>x] \to 
r^{-\alpha_0}\left(p\bar G_1 (x)+q\bar{G}_2 (x)\right).\label{eq:Gmix}
\end{align}

Traditionally \citep{resnickbook:2007}, regular variation on $\E$ has been studied using the one
point uncompactification, vague convergence and the polar coordinate
transform $\bx\mapsto (\|\bx\|,\bx/\|\bx\|).$ On $\E$ this works fine
because $\{\bx \in \E: \|\bx\|=1\}$ is compact and lines through
$\binfty$ cannot carry mass.  However,  on $\E_0$ the
traditional unit sphere $\{\bx \in \E_0: \|\bx\|=1\}$ is no longer
compact. Hence, Radon measures on $\{\bx\in \E_0: \|\bx\|=1\}$ may
not be finite and for estimation problems
 the approach relying on vague convergence is a dead end if 
estimation of a possibly infinite measure is
required. More details on why an approach without compactification is
desirable are in \cite{hult:lindskog:2006a, das:mitra:resnick:2013,
  lindskog:resnick:roy:2013}. We emphasize it is difficult to
discuss MRV on $\E_0$ with the conventional unit sphere and it is
preferable to use $\aleph_{[\axes]}$.

\subsection{Basic notation}\label{subsec:notation}
Here is a  notation and concept summary.
$$ 
\begin{array}{llll}
RV_\beta & \text{Regularly varying functions with index $\beta>0$. We
  can and do}\\
&\text{assume such functions are continuous and strictly increasing.}\\[2mm]
\E & \mathbb{R}^2 \setminus \{\bzero\}.\\[2mm]
[\axes] & \{0\}\times \mathbb{R}_+ \cup \mathbb{R}_+ \times \{0\}.\\[2mm]
\E_0 &\mathbb{R}^2 \setminus [\axes].\\[2mm]
\M(\mathbb{C}\setminus \mathbb{C}_0) & \text{The set of all non-zero
measures on $\mathbb{C}\setminus \mathbb{C}_0$ which are finite on}\\&\text{
subsets bounded away from $\mathbb{C}_0$.}\\[2mm]
\mathcal{C}(\mathbb{C}\setminus \mathbb{C}_0) & \text{Continuous, bounded,
  positive functions on $\mathbb{C}\setminus \mathbb{C}_0$ whose
  supports}\\&\text{are bounded away from $\mathbb{C}_0$. Without
  loss of generality \citep{lindskog:resnick:roy:2013}, we
  may}\\&\text{assume the functions are
  uniformly continuous.}\\[2mm] 
\mu_n\to \mu & \text{Convergence in $\M(\mathbb{C}\setminus
  \mathbb{C}_0)$ means $\mu_n (f) \to \mu(f)$ for all}\\
 &f \in \mathcal{C}(\mathbb{C}\setminus \mathbb{C}_0). \text{ See
   \cite{hult:lindskog:2006a, das:mitra:resnick:2013,
  lindskog:resnick:roy:2013}.}
\\[2mm]
\aleph_{\mathbb{C}} & \{\bx : d(\bx,\mathbb{C} )=1\}.\\[2mm]
\aleph_{\bzero} & \{\bx \in \E : d(\bx, \{\bzero \})=1\}.\\[2mm]
\aleph_{[\axes]} & \{\bx \in \E_0: d(\bx,[\axes])=1\}=\{1\}\times
[1,\infty)\cup [1,\infty)\times \{1\}.\\[2mm]
\MRV & \text{multivariate regular variation; for this paper, it means regular variation
  on $\E$}.\\[2mm]
\HRV & \text{hidden regular variation; for this paper, it means regular variation on
  $\E_0$}.\\[2mm]
\gpolar & \text{{Polar co-ordinate transformation} relative to the deleted cone $\mathbb{C}_0$,}\\ 
&\gpolar(\bx)=\bigl(d(\bx, \mathbb{C}_0),
\bx/d(\bx,\mathbb{C}_0)\bigr). \text{ See
  \cite{lindskog:resnick:roy:2013, das:mitra:resnick:2013}.}\\[2mm]
\bX \independent \bY & \text{The random elements $\bX, \bY$ are independent.}
\end{array}
$$

\section{Generating Regularly Varying Models} \label{sec:genMRV}
We outline schemes for generating regular variation. These schemes
generate the full totality of asymptotic limits but not the full
totality of pre-asymptotic models; {so there can be many other ways to get the same asymptotic models.}
\subsection{Generating regular variation on
  $\E$.}\label{subsec:regVarE}
The easiest way to obtain a regularly varying model on $\E$ 
with scaling function $b(t)$ and limit measure 
$\nu(\cdot) =\nu_\alpha \times S \circ {\gpolar}$ is as follows:
Suppose $R$ is a random element of $(0,\infty)$ with a regularly
varying tail and scaling function $b(t)$:
$$t\Prob[R/b(t) >x]\to x^{-\alpha},\quad x>0,\,\alpha>0.$$
Let $\bTheta$ be a random element of $\aleph_{\bzero }$ with
distribution $S$
$$\Prob[\bTheta \in \cdot \,] =S(\cdot)$$ and which is independent of
$R$. 
Then $\bZ:=R\bTheta =  \gpolar^\leftarrow (R,\bTheta)           $ is regularly varying on $\E $ with limit measure
$\nu=\nu_\alpha \times S \circ {\gpolar}$ on $\E$
because
\eqref{eq:regVarEPolar} and consequently \eqref{eq:regVarE} hold. Note
$\gpolar $ is defined relative to the deleted cone $\{\bzero\}$.

\subsection{Generating regular variation on
  $\E_0$ (and sometimes also on $\E$).}\label{subsec:regVarE0}
As suggested in \cite{maulik:resnick:2005}, we may follow the same
scheme as in Section \ref{subsec:regVarE}. Let $R_0 $ be a random
element of $(0,\infty)$ that is regularly varying with index
$\alpha_0$ and scaling function $b_0(t)$. Let $\bTheta _0$ be a random
element of $\aleph_{[\axes]}$ with distribution $S_0$ and independent
of $R_0$. Then
$\bZ=R_0\bTheta_0
=\gpolar^\leftarrow (R_0,\bTheta_0)$ is regularly varying with scaling
function $b_0(t)$ and limit measure $\nu_0:=\nu_{\alpha_0} \times S_0
\circ \gpolar^{-1}$ on $\E_0$ because \eqref{eq:regVarE0Polar} and therefore
\eqref{eq:regVarE0} hold.

In practice we specify the measure $S_0$ on $\aleph_{[\axes]}$ as
follows: Let $G_1,G_2$ be two probability measures on $(1,\infty)$ and
define
\begin{equation}\label{eq:mixThetas}
\bTheta_0=B(\Theta_1,1)+(1-B)(1,\Theta_2)
\end{equation}
 where
$B,\Theta_1,\Theta_2$ are independent, $B$ is a Bernoulli switching
variable with $P[B=1]=p=1-P[B=0]$ and $\Theta_i$ has distribution
$G_i$, $i=1,2$.
So $G_1$ smears probability mass on the horizontal line emanating from
$(1,1)$ and $G_2$ does the same thing for the vertical line.

For estimation purposes, note for $s>1$ that
\begin{align}
\bar G_1(s)&=G_1(s,\infty) =\nu_0\{\bx \in \E_0: x_1/x_2>s\},\label{eq:G1new}\\
\bar G_2(s)&=G_2(s,\infty) =\nu_0\{\bx \in \E_0: x_2/x_1>s\}.\label{eq:G2new}
\end{align}

Depending on the moments of $G_i$, $i=1,2$, it may be possible to
extend the regular variation constructed  on $\E_0$ to $\E$ so that the marginals
$Z_1,Z_2$ individually have tails which are regularly varying. This
means \citep{maulik:resnick:2005}
$$\nu_0\{\bx\in\E_0: \|\bx\|>1\}<\infty,$$
which occurs when 
$$\bigvee_{i=1}^2 \int_1^\infty s^{\alpha_0  -1} \bar G_i(s)ds
<\infty,$$
and is thus a somewhat restricted case.
Regular variation on $\E_0$ by itself does not in general imply one dimensional
regular variation of the marginals. Also if the tails of $G_i$ are
heavier than the tail of $R$, we can have regular variation on $\E_0$ with
index $\alpha_0$ but the tails of $Z_1$ and $Z_2$ may be regularly varying
with a smaller index $\alpha$. Full discussion is in
\cite{maulik:resnick:2005}.

\section{Generating models that have both multivariate regular
  variation on $\E$ and HRV on $\E_0$.}\label{sec:genMRVHRV}
We summarize  several methods for generating models possessing both MRV on
$\E$ and HRV on $\E_0$.

\subsection{Mixture method.}\label{subsec:methodMix}
This method \cite{maulik:resnick:2005, resnickbook:2007} expresses the
random vector $\bZ$ as 
$$\bZ=B\bY+ (1-B)\bV,
$$
a mixture where $\bY$ gives the regular variation on $\E$ and $\bV$
gives the regular variation on $\E_0$. Since HRV implies that MRV on
$\E$ must include asymptotic independence \citep{resnick:2002a,
  resnickbook:2007}, we need $\bY$ to model MRV with index $\alpha$ on
$\E$ and have asymptotic independence. So we take $\bY$ to concentrate
on $[\axes]$ and 
\begin{equation}\label{eq:YhugAxes}
\bY=B_1 (\xi_1,0) +(1-B_2)(0,\xi_2)
\end{equation}
 where $B_1,\xi_1,\xi_2 $ are
independent, $B_1$ is a Bernoulli switching variable and 
\begin{equation}\label{eq:marginalsRegVar}
t\Prob[\xi_i/b(t)>x]\to x^{-\alpha},\quad x>0, \,\alpha >0, \,t\to\infty.
\end{equation}
Construct $\bV $ by the scheme of Section \ref{subsec:regVarE0} to be
regularly varying on $\E_0$ with limit measure $\nu_0$ and scaling 
function $b_0(t)$.  The resulting $\bZ$ has both MRV on $\E $ 
and HRV on $\E_0$:
$$\bZ \in \MRV(\alpha,b(t),\nu, \E)\cap \MRV(\alpha_0,b_0(t),\nu_0, \E_0).$$

\subsection{Additive method.} Weller and Cooley
\citep{weller:cooley:2013} advocate  an additive model of the form 
$$\bZ=\bY+\bV,$$
where $\bY\in \MRV(\alpha,b(t),\nu,\E)$ 
and $\bV$ has HRV and $\bV \in \MRV(\alpha_0 , b_0(t), \nu_0,
\E_0)$ and $\bY\independent \bV$.  
They argue there are advantages for estimating the parameters and the
additive model overcomes the undesireable and usually unrealistic
feature of the mixture method that points are 
installed  directly on the axes.
 However, as we will see, the additive model does not
always successfully separate the HRV piece in a way that is
identifiable.

\subsubsection{Simple case: $\bY$ has no HRV and there is a  finite
  hidden angular   measure.}\label{subsubsec:finiteAngMeas} 
We start with the simplest result.

\begin{prop}\label{prop:1} 
Suppose 
\begin{enumerate}
\item $\bY$ has the structure given in
  \eqref{eq:YhugAxes} (so that $\bY$ has no HRV)  and 
  \eqref{eq:marginalsRegVar} holds.
\item $\bV$ has MRV on $\E$ (not $\E_0$) with index $\alpha_0\geq
  \alpha$, scaling function $b_0(t) = o(b(t))$, limit measure $\nu_0
  \in M(\E)$ and no asymptotic
  independence. 
Regular variation
  of $\bV$ on $\E$ has the consequence that  for $i=1,2$,
\begin{equation}\label{eq:marginalV}
t\Prob[V_i>b_0(t)x]\to c_ix^{-\alpha_0} ,\,x>0,
  \,t\to\infty, \,c_i\geq 0,\, c_1\vee c_2>0.\end{equation}
\end{enumerate}
Then $\bZ:=\bY+\bV$ has 
\begin{enumerate}
\item MRV on $\E$: $\bZ \in \MRV(\alpha,b(t),\nu,\E)$  and
$\bZ$  has asymptotic independence.
\item HRV on $\E_0$:  $\bZ \in \MRV(\alpha_0,b_0(t),{\nu_0}_{|_{\E_0}}
  , \E_0)$. The limit measure  ${\nu_0}_{|_{\E_0}}$ is 
 $\nu_0 $ restricted to $\E_0$ and 
\begin{equation}\label{eq:nu0finite}
\nu_0\{\bx\in \E_0: \|\bx\|\geq 1\}<\infty.
\end{equation}
 \end{enumerate}
\end{prop}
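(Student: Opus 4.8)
The proof naturally splits according to the two scales $b(t)$ and $b_0(t)$, and the guiding principle is that $\bV$ is asymptotically negligible at the coarse scale $b(t)$, while $\bY$, although \emph{not} negligible at the fine scale $b_0(t)$, is supported on $[\axes]$ and hence cannot by itself place mass on sets bounded away from the axes.

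For part (1), the plan is to show that $\bV$ is negligible relative to $b(t)$. Since $\|\bV\|$ has a regularly varying tail with scaling $b_0(t)$ and index $\alpha_0$, and $t\Prob[\|\bV\|>b_0(t)]\to\nu_0\{\|\bx\|>1\}<\infty$ because $\bV\in\MRV$ on $\E$, writing $\epsilon b(t)=b_0(t)\cdot(\epsilon b(t)/b_0(t))$ with $\epsilon b(t)/b_0(t)\to\infty$ and applying the uniform convergence theorem (Potter bounds) for $RV_{-\alpha_0}$ gives
\[
t\Prob[\|\bV\|>\epsilon b(t)]\to 0,\qquad \epsilon>0.
\]
With this in hand, a standard perturbation argument shows that $t\Prob[(\bY+\bV)/b(t)\in\cdot\,]$ and $t\Prob[\bY/b(t)\in\cdot\,]$ have the same $\M(\E)$-limit: for $f\in\C(\E)$ split on $\{\|\bV\|\le\epsilon b(t)\}$ and its complement, use uniform continuity of $f$ together with the fact that $\mathrm{supp}\,f$ is bounded away from $\bzero$ on the first event, and the bound $2\|f\|_\infty\, t\Prob[\|\bV\|>\epsilon b(t)]\to0$ on the second. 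Thus $\bZ$ inherits the $\E$-limit $\nu$ of $\bY$, and since $\bY$ in \eqref{eq:YhugAxes} lives on $[\axes]$, so does $\nu$, giving asymptotic independence.

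For part (2) I would test against the upper-right rectangles $\Lambda=(a,\infty)\times(c,\infty)$ with $a,c>0$, which are bounded away from $[\axes]$ and form a convergence-determining class for $\M(\E_0)$; choose $a,c$ to be continuity points of $\nu_0$. Conditioning on the switch $B_1$ and using the independence $\bY\independent\bV$ of the additive model (so that $(\xi_i,\bV)\independent B_1$ and $\xi_i\independent\bV$),
\[
t\Prob[\bZ/b_0(t)\in\Lambda]
= t\,\Prob[B_1=1]\,\Prob[\xi_1+V_1>ab_0(t),\,V_2>cb_0(t)]
+ t\,\Prob[B_1=0]\,\Prob[V_1>ab_0(t),\,\xi_2+V_2>cb_0(t)],
\]
so it suffices to show each conditional probability converges to $\nu_0(\Lambda)$; by symmetry I treat the $B_1=1$ term. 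Writing the event $\{\xi_1+V_1>ab_0(t)\}$ as the disjoint union of $\{V_1>ab_0(t)\}$ and $\{V_1\le ab_0(t),\,\xi_1+V_1>ab_0(t)\}$, the first piece already gives $t\Prob[V_1>ab_0(t),V_2>cb_0(t)]\to\nu_0(\Lambda)$ by MRV of $\bV$ on $\E$.

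The heart of the matter, and the step I expect to be the main obstacle, is to show that the \textbf{extra mass created by adding} $\xi_1$ vanishes:
\[
t\Prob[V_1\le ab_0(t),\ \xi_1+V_1>ab_0(t),\ V_2>cb_0(t)]\to0.
\]
This needs care precisely because $\xi_1$ is \emph{not} negligible at the fine scale, indeed $t\Prob[\xi_1>\vep b_0(t)]\to\infty$; the point is that the factor $V_2>cb_0(t)$ already forces $\bV$ to be extreme (cost $\sim1/t$), and independence lets me peel off $\xi_1$. Fixing small $\vep>0$ I split the $V_1$-range at $(a-\vep)b_0(t)$: on $V_1\in((a-\vep)b_0(t),ab_0(t)]$ the probability is at most $t\Prob[(a-\vep)b_0(t)<V_1\le ab_0(t),\,V_2>cb_0(t)]\to\nu_0\{(a-\vep,a]\times(c,\infty)\}$, small for small $\vep$ by continuity of $\nu_0$; on $V_1\le(a-\vep)b_0(t)$ the contribution is at most $\Prob[\xi_1>\vep b_0(t)]\cdot t\Prob[V_2>cb_0(t)]$, where $t\Prob[V_2>cb_0(t)]\to c_2c^{-\alpha_0}<\infty$ by \eqref{eq:marginalV} and $\Prob[\xi_1>\vep b_0(t)]\to0$. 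Taking $t\to\infty$ and then $\vep\downarrow0$ kills both. The analogous estimate handles $B_1=0$, and summing yields $t\Prob[\bZ/b_0(t)\in\Lambda]\to\nu_0(\Lambda)$, i.e. $\bZ\in\MRV(\alpha_0,b_0(t),{\nu_0}_{|_{\E_0}},\E_0)$. Finally \eqref{eq:nu0finite} is immediate, since $\{\bx\in\E_0:\|\bx\|\ge1\}\subset\{\bx\in\E:\|\bx\|\ge1\}$ is bounded away from $\bzero$ and therefore has finite $\nu_0$-mass as $\nu_0\in\M(\E)$; the hypothesis that $\bV$ has no asymptotic independence is what makes ${\nu_0}_{|_{\E_0}}$ nonzero, so the HRV produced is nondegenerate.
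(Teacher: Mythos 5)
Your proof is correct and follows essentially the same route as the paper: part (1) by the standard one‑large‑jump perturbation at scale $b(t)$ (which the paper simply cites), and part (2) by the same dichotomy the paper uses — either $\xi_1\le\vep b_0(t)$, so $V_1$ must itself be near the threshold and the discrepancy is controlled by continuity of $\nu_0$, or $\xi_1>\vep b_0(t)$, in which case independence and \eqref{eq:marginalV} give the bound $\Prob[\xi_1>\vep b_0(t)]\cdot t\Prob[V_2>cb_0(t)]\to 0$. The only cosmetic difference is that you test on upper‑right rectangles (a standard convergence‑determining class for $\M(\E_0)$) where the paper tests on uniformly continuous functions supported away from the axes and uses the modulus of continuity in place of your continuity‑set argument.
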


The last condition means the hidden limit measure $\nu_0 $ has finite
spectral measure with respect to the conventional unit sphere
since $\bV$ has MRV on $\E$.
So the construction in 
Proposition \ref{prop:1} yields only a special case of HRV since there
are many cases where \eqref{eq:nu0finite} fails.

\begin{proof}
The statement about MRV on $\E$ can be deduced from known results,
eg. \citet[p. 230]{resnickbook:2007}, \citet{jessen:mikosch:2006,
  resnick:1986}. (Note, it would not be enough to assume $\bV \in
\MRV(\alpha_0, b_0, \nu_0, \E_0)$.)
To prove HRV of $\bZ$ on $\E_0$,
we apply criterion (ii) of the Portmanteau Theorem 2.1 in
\cite{lindskog:resnick:roy:2013} and 
 let $f \in \mathcal{C}((0,\infty)^2)$ and
without loss of generality suppose $f$ is bounded by a constant $\|f\|$, uniformly
continuous and 
$$f(\bx)=0, \quad \text{ if }x_1 \wedge x_2<\eta,$$
for some $\eta >0$. Uniform continuity of $f$ means that the modulus
of continuity 
$$\omega_f(\delta):=\sup \{|f(\bx)-f(\by)|: \|\bx -\by\| <\delta\} \to
0,\quad (\delta \to 0).$$
Since $\bV$ has MRV on $\E$ we have
$$t\bfE f(\bV/b_0(t)) \to \nu_0(f),$$
and so it suffices to show as $t\to\infty$.
\begin{equation}\label{eq:ditchY}
t\bfE f\Bigl(\frac{\bY+\bV}{b_0(t)} \Bigr) -t\bfE f\Bigl(\frac{\bV}{b_0(t)}\Bigr) \to 0.
\end{equation}
Because of the special structure of $\bY$, the absolute value of the
difference on the previous line is bounded 
by 
\begin{align*}
\frac t2 \bfE\Bigl |  f\Bigl(
&\frac{\xi_1+V_1}{b_0(t)},\frac{V_2}{b_0(t)}\Bigr) -f(\bV/b_0(t))\Bigr|
+
\frac t2 \bfE\Bigl |  f\Bigl(
\frac{V_1}{b_0(t)},\frac{\xi_2+V_2}{b_0(t)}\Bigr) -f(\bV/b_0)\Bigr|\\
=&I+II.
\end{align*}
For $\delta <\eta$, write
$$2I =t\bfE|\cdot|1_{[\xi_1/b_0(t) <\delta]} + t\bfE|\cdot|1_{[\xi_1/b_0(t) >\delta]} =2Ia+2Ib.$$
To keep both terms of the difference from being zero we  write
\begin{align*}
2Ia=
t \bfE\Bigl |  f\Bigl(
&\frac{\xi_1+V_1}{b_0(t)},\frac{V_2}{b_0(t)}\Bigr)
-f(\bV/b_0)\Bigr|1_{[\xi_1<b_0(t)\delta, V_1>b_0(t) (\eta -\delta)]}\\
\leq & \omega_f(\delta ) t\Prob[V_1>b_0(t) (\eta -\delta)] \to \omega_f
(\delta ) c_1 (\eta -\delta)^{-\alpha_0} \quad (t\to\infty),\\
\to & 0 \quad (\delta \to 0),
\end{align*}
where we used \eqref{eq:marginalV}.

For $2Ib$, in order to keep both terms of the difference from being
zero, we write,
\begin{align*}
2Ib=&\frac t2 \bfE \Bigl |  f\Bigl(
\frac{\xi_1+V_1}{b_0(t)},\frac{V_2}{b_0(t)}\Bigr)
-f(\bV/b_0)\Bigr|1_{[\xi_1 >b_0(t)\delta, V_2>b_0(t)\delta]}\\ 
\leq &2\|f\| t\Prob[V_2>b_0(t)\delta]\Prob[\xi_1>b_0(t)\delta ]\\
\intertext{and as $t\to\infty$ this is}
\sim &2\|f\| c_2 \delta^{-\alpha_0}\Prob[\xi_1> b_0(t)\delta] \to 0 \quad
(t\to\infty).
\end{align*}
We handle $II$ similarly.\end{proof}

\begin{exm}\label{eg:simple}
Suppose $\bY$ has the structure given in
  \eqref{eq:YhugAxes} where $\xi_1,\xi_2$ are iid Pareto distributed with index
  $\alpha$. Assume $\bV=R_0\bTheta _0$ where $R_0$ is Pareto distributed index $\alpha_0
  >\alpha$
and $\bTheta_0 $ has the structure given  in \eqref{eq:mixThetas} where
$\Theta_i=1+\bfE_i$ and $E_1,E_2 $ are two standard iid exponential
random variables. Then $\bV=R_0\bTheta_0 \in \MRV(\alpha_0,b_0(t), \nu_0,
\E)$ and 
$$\nu_0=\nu_{\alpha_0} \times \Prob[\bTheta \in \cdot\,] \circ \gpolar^{-1}.$$
This construction makes the marginals of $\bV=(V_1,V_2) $ regularly
varying with index $\alpha_0$ which is consistent with $\bV$ being MRV
on $\E$ rather than just $\E_0$:
\begin{align*}
\Prob[V_1>x]=&p\Prob[R(1+E_1)>x]+q\Prob[R>x] \\
\sim & px^{-\alpha_0} \mathbf{E}
\bigl((1+E_1)^{\alpha_0} \bigr) +qx^{-\alpha_0},\\
\intertext{(where the $\sim $ results from an application of Breiman's
  theorem \citep{breiman:1965} on products)}
=& (const) x^{-\alpha_0}.
\end{align*}
{Here $p= 1-q=P(\bTheta_0 \in ((1,\infty)\times\{1\}))$.}

To check  whether we can get
identify the distributions of $\bY$ and $\bV$ from a data sample of
$\bZ=\bY+\bV$, we simulate data following this model for three
different choices 
of $\alpha$ while keeping $\alpha_0$ fixed. We then check whether we can estimate
back the values of $\alpha$ and $\alpha_0$. In all the three cases
$\alpha_0=2$ with $\Theta_1\eqdis \Theta_2 $ with $(\Theta_1-1)$
following an iid standard exponential distribution and $p=0.5$. In
each case we simulate 10000 iid samples from $\bZ$. Then we create
Hill plots for the marginals of $Z_1$ and $Z_2$ to identify the value
of $\alpha$. To detect the hidden part we create a Hill plot for
$\min(Z_1,Z_2)$ to find the value of $\alpha_0$. Referencing
\eqref{eq:Gmix}, we also make a QQ plot of $\max(Z_1/Z_2,Z_2/Z_1)$ for
the 100 highest values of $\min (Z_1,Z_2)$ against the quantiles of
standard exponential which is the distribution of $\Theta_1$ and
$\Theta_2$. We discuss the cases below.

\begin{enumerate}
\item[Case 1:]  $\alpha=1$. The top panel of Figure \ref{fig:Ex31a}  indicates that we can identify the tails of $\bZ$
to be heavy tailed. The correct index $\alpha=1$ is slightly overestimated. The Hill plot of $\min(Z_1,Z_2)$ also indicates HRV on $\E_0$ with index close to
$\alpha_0=2$. The QQ plot of  $\max(Z_1/Z_2,Z_2/Z_1)$ thresholded by
the 100 largest  values of $\min (Z_1,Z_2)$  against standard
exponential shows a decent fit. 

  \begin{figure}[h]
  \begin{centering}
  \includegraphics[width=10cm]{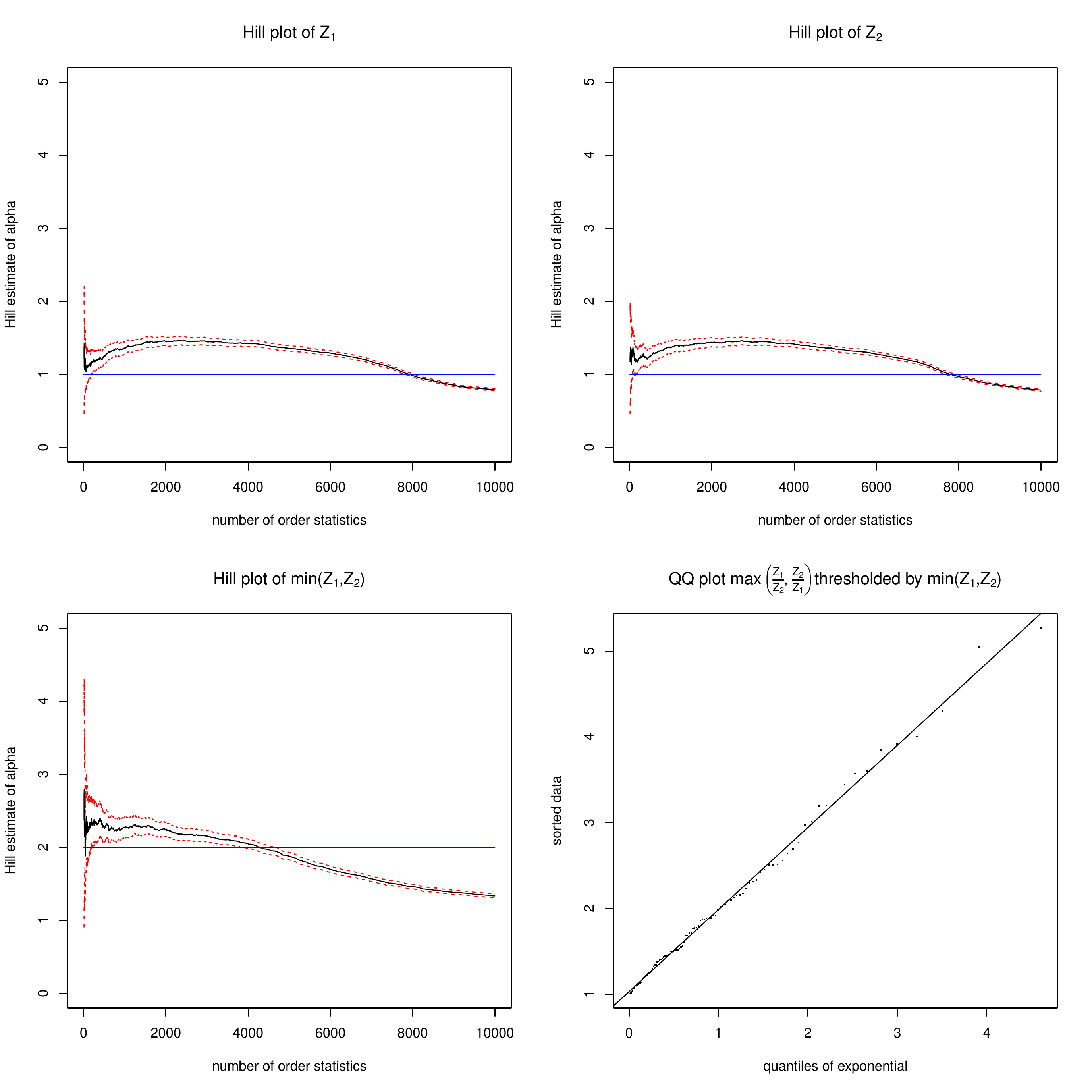}
  \end{centering}
  \caption{Exploratory plots for Example \ref{eg:simple}, case 1, with
    $\alpha=1, \alpha_0=2$. {\it Top panel:} Hill plots for the
    marginals $Z_1$ and $Z_2$. {\it Bottom left:} Hill plot for
    $\min{\{Z_1,Z_2\}}$. {\it Bottom right:} exponential QQ plot of
    $\max{\{Z_1/Z_2,Z_2/Z_1\}}$ thresholded by the 100 largest values of
    $\min \{Z_1,Z_2\}$.} \label{fig:Ex31a} 
  \end{figure}

  \item[Case 2:]  $\alpha=1.5$. The top panel of Figure \ref{fig:Ex31b}  again indicates that we can identify the tails of $\bZ$
to be heavy tailed. The index $\alpha$ is again overestimated, this
time more than in the previous case, perhaps because of the closeness of $\alpha$ to $\alpha_0$. The Hill plot of $\min\{Z_1,Z_2\}$ also indicates HRV on $\E_0$ with index close to
$\alpha_0=2$. The QQ plot of  $\max\{Z_1/Z_2,Z_2/Z_1\}$ thresholded by
the 100 largest values of $\min \{Z_1,Z_2\}$  against standard
exponential shows a decent fit again. 

  \begin{figure}[h]
  \begin{centering}
  \includegraphics[width=10cm]{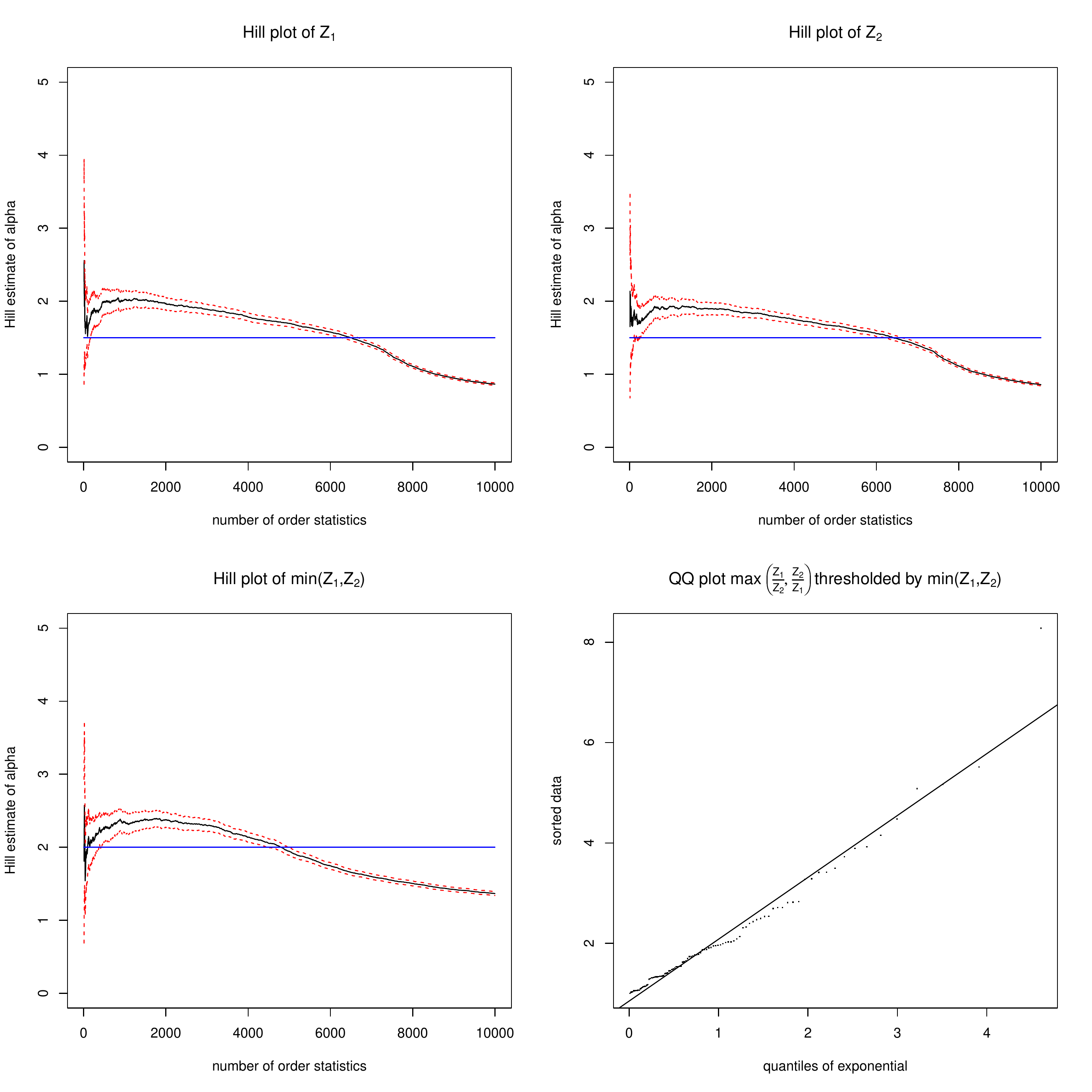}
   \end{centering}
  \caption{Exploratory plots for Example \ref{eg:simple}, case 2, with
    $\alpha=1.5, \alpha_0=2$. {\it Top panel:} Hill plots for the
    marginals $Z_1$ and $Z_2$. {\it Bottom left:} Hill plot for
    $\min\{Z_1,Z_2\}$. {\it Bottom right:} exponential QQ plot of
    $\max\{Z_1/Z_2,Z_2/Z_1\}$ thresholded by the 100 largest values of
    $\min \{Z_1,Z_2\}$.} \label{fig:Ex31b} 
  \end{figure}
  
 \medskip \item[Case 3:]  $\alpha=0.5$. In this case too, the top panel of
    Figure \ref{fig:Ex31c}  indicates heavy tailed behavior of
    $\bZ$. The Hill plot of $\min(Z_1,Z_2)$ also indicates hidden
    regular variation. The indices $\alpha=0.5$ and $\alpha_0 =2$ are
 reasonably estimated here, presumably because the original
    values of $\alpha$ and $\alpha_0$ are far apart. However, 
    the exponential QQ plot of  $\max\{Z_1/Z_2,Z_2/Z_1\}$ for the 100
largest values of $\min \{Z_1,Z_2\}$  struggles to indicate an
    exponential fit. 
  \begin{figure}[h]
  \begin{centering}
  \includegraphics[width=10cm]{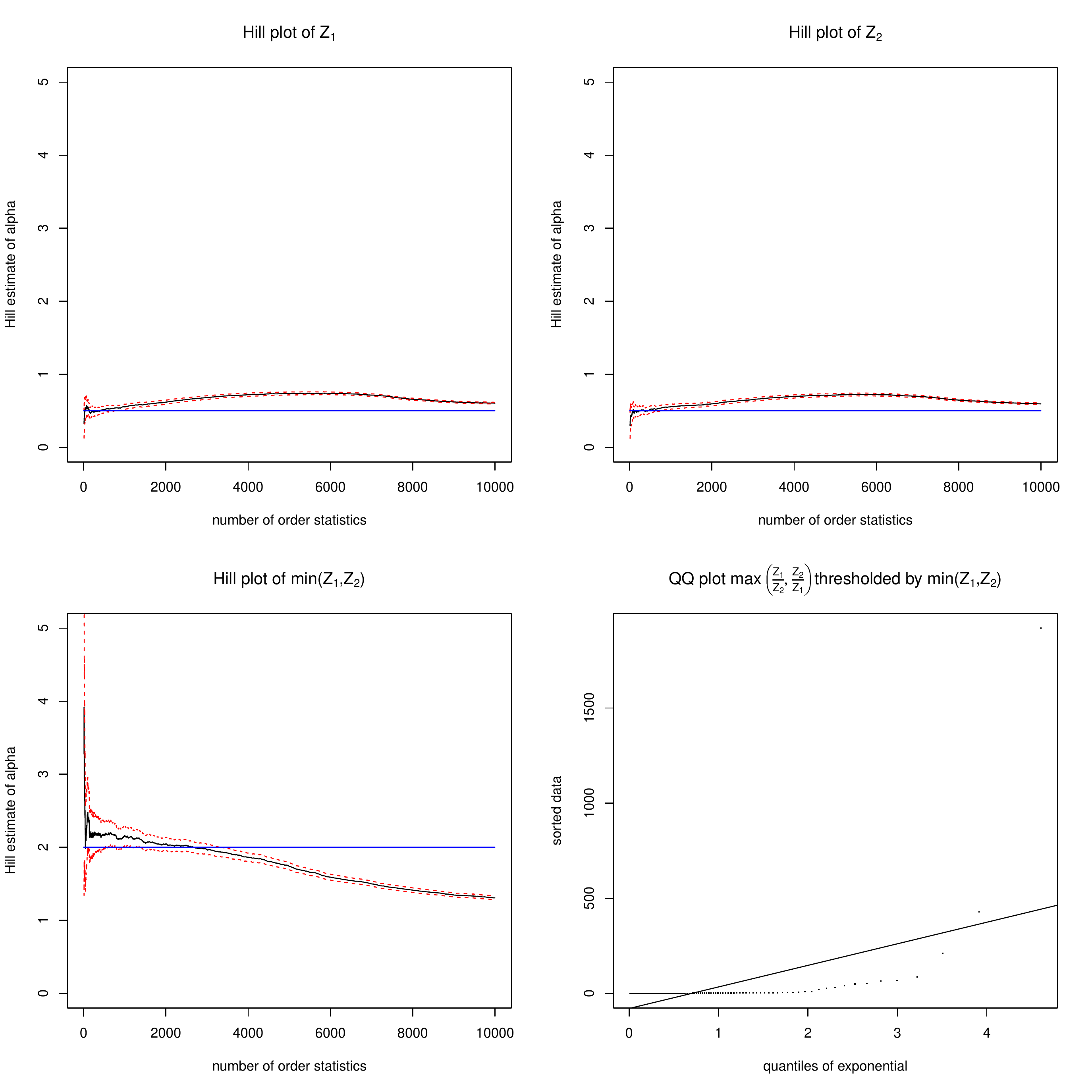}
  \end{centering}
  \caption{Exploratory plots for Example \ref{eg:simple}, case 3, with
    $\alpha=0.5, \alpha_0=2$. {\it Top panel:} Hill plots for the
    marginals $Z_1$ and $Z_2$. {\it Bottom left:} Hill plot for
    $\min\{Z_1,Z_2\}$. {\it Bottom right:} exponential QQ plot of
    $\max\{Z_1/Z_2,Z_2/Z_1\}$ thresholded by the 100 maximum values of
    $\min (Z_1,Z_2)$.} \label{fig:Ex31c} 
  \end{figure}

\end{enumerate}
\qed \end{exm}

\subsubsection{What happens if $\bY$ has HRV but $\bV$ has no
  HRV}\label{subsubsec:Yhrv} 
In Proposition \ref{prop:1}, we can remove the restriction that $\bY=(Y_1,Y_2)$
concentrates on the axes at the expense of a tail condition on $\bY$
that guarantees the tails of $\bV $ and $\bY$ do not interact in such
as way as to obscure the fact that the hidden angular measure of $\bZ$
is that of $\bV$.
Continue to suppose {$\bZ=\bY+\bV$ with $\bY\independent \bV$}.
\begin{prop}\label{prop:2} Suppose
\begin{enumerate}
\item $\bY\in \MRV(\alpha, b(t), \nu, \E)$ and {exhibits} asymptotic independence.
\item $\bV$  has  \MRV on $\E$ (not $\E_0$) with index $\alpha_0 \geq
  \alpha$, scaling function $b_0(t)=o(b(t))$, limit measure $\nu_0 \in
  M(\E)$
  with no asymptotic independence so that
$$t\Prob[\bV/b_0(t) \in \cdot \,] \to \nu_0 \quad \text{ in } \M(\E).$$
\item The interaction of the tails of $\bY$ and $\bV$ is  controlled by
  the condition
\begin{equation}\label{eq:nononanette}
t\Prob[Y_1\wedge Y_2>b_0(t) x] \to 0,\quad t\to\infty, \,x>0.
\end{equation}
\end{enumerate}
Then $\bZ=\bY+\bV$ has
\begin{enumerate}
\item $\MRV(\alpha, b(t), \nu, \E)$ and
  asymptotic independence.
\item HRV on $\E_0$ with index $\alpha_0$, scaling function $b_0(t)$,
  limit measure $\nu_0$ restricted to $\E_0$.
  \end{enumerate}
\end{prop}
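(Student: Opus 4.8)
The plan is to prove the two conclusions separately, dispatching the MRV statement by citation and concentrating the genuine work on the HRV statement. The latter parallels the proof of Proposition \ref{prop:1}, but must now accommodate a general asymptotically independent $\bY$ rather than one pinned to the axes, and it is precisely condition \eqref{eq:nononanette} that compensates for dropping the axis-concentration of $\bY$.

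For the MRV conclusion on $\E$, I would argue exactly as in Proposition \ref{prop:1}. Since $b_0(t)=o(b(t))$ and $\bV$ is regularly varying with index $\alpha_0\geq\alpha$ on the smaller scale $b_0(t)$, the summand $\bV$ is asymptotically negligible at scale $b(t)$, so $\bZ=\bY+\bV$ inherits $\MRV(\alpha,b(t),\nu,\E)$ and asymptotic independence from $\bY$. This is standard for sums of independent regularly varying vectors and can be quoted from \citet[p.~230]{resnickbook:2007} and \citet{jessen:mikosch:2006, resnick:1986}.

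For the HRV conclusion I would invoke criterion (ii) of the Portmanteau theorem of \cite{lindskog:resnick:roy:2013} and take a positive, bounded, uniformly continuous $f\in\mathcal{C}((0,\infty)^2)$ with $f(\bx)=0$ whenever $x_1\wedge x_2<\eta$. Because $\bV$ has MRV on $\E$ and $f$ vanishes near the axes, $t\bfE f(\bV/b_0(t))\to\nu_0(f)=\nu_0|_{\E_0}(f)$, so it again suffices to establish the analogue of \eqref{eq:ditchY}, namely $t\bfE f((\bY+\bV)/b_0(t))-t\bfE f(\bV/b_0(t))\to0$. I would bound the absolute difference by splitting the expectation over $\bY$ at a threshold $\delta<\eta$. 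On the event $\{Y_1\wedge Y_2>b_0(t)\delta\}$ the crude bound $2\|f\|$ together with \eqref{eq:nononanette} kills the contribution as $t\to\infty$. On the event $\{Y_1\leq b_0(t)\delta,\,Y_2\leq b_0(t)\delta\}$ both coordinates are shifted by at most $\delta$, so the difference is at most $\omega_f(\sqrt2\,\delta)$ and is nonzero only when $V_1\wedge V_2>b_0(t)(\eta-\delta)$; this contributes at most $\omega_f(\sqrt2\,\delta)\,\nu_0\{x_1\wedge x_2>\eta-\delta\}$ in the limit, which vanishes as $\delta\to0$.

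The main obstacle is the mixed region, say $\{Y_1\leq b_0(t)\delta<Y_2\}$ (and its symmetric partner), where one coordinate of $\bY$ is small but the other may be arbitrarily large, so the modulus of continuity gives no control on the second coordinate. Here I would exploit positivity of $f$ to bound the difference by the sum $f((\bY+\bV)/b_0(t))+f(\bV/b_0(t))$ and treat the two terms using independence of $\bY$ and $\bV$. For the $\bV$ term, $t\bfE f(\bV/b_0(t))\to\nu_0(f)$ is finite while $\Prob[Y_2>b_0(t)\delta]\to0$, so the product tends to $0$. For the $(\bY+\bV)$ term, $f\neq0$ forces $(Y_1+V_1)\wedge(Y_2+V_2)>b_0(t)\eta$, and since $Y_1\leq b_0(t)\delta$ this forces $V_1>b_0(t)(\eta-\delta)$; hence this term is at most $\|f\|\,t\Prob[V_1>b_0(t)(\eta-\delta)]\,\Prob[Y_2>b_0(t)\delta]$, a finite limit times a vanishing factor, so it too tends to $0$. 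Collecting the regions, taking $t\to\infty$ and then $\delta\to0$ yields the analogue of \eqref{eq:ditchY} and completes the HRV claim. The delicate point throughout is legitimizing the order of limits and checking that every bounding set, such as $\{x_1\wedge x_2>\eta-\delta\}$, is bounded away from the axes so that $\nu_0$ assigns it finite mass.
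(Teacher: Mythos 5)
Your proposal is correct and follows essentially the same route as the paper: cite the standard result for MRV on $\E$, then verify \eqref{eq:ditchY} via the Portmanteau criterion by partitioning on whether both, neither, or exactly one coordinate of $\bY$ exceeds $b_0(t)\delta$, using \eqref{eq:nononanette} for the first region, the modulus of continuity plus $t\Prob[V_1\wedge V_2>b_0(t)(\eta-\delta)]$ for the second, and independence together with the marginal tail of $\bV$ for the mixed region. The only cosmetic difference is that in the mixed region you bound the difference by the sum of the two terms, while the paper bounds it by $2\|f\|$ times the probability of the forced event $\{Y_i>b_0(t)\delta,\,V_j>b_0(t)(\eta-\delta)\}$; these are the same estimate.
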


{\bf Remarks:} 
For the $\bY$ defined in Proposition \ref{prop:1}, $Y_1\wedge Y_2=0$
so \eqref{eq:nononanette} is automatic. If $Y_1,Y_2$  are iid with
$\Prob[Y_i>x] \in RV_{-\alpha}$,  $\bY$ itself has HRV \cite{resnick:2002a,
  resnickbook:2007} with index $\alpha_0=2\alpha$
and condition \eqref{eq:nononanette} is needed to guarantee the
HRV of $\bZ$ comes from $\bV$ and not $\bY$. Condition
\eqref{eq:nononanette} is equivalent in this case to 
\begin{equation}\label{eq:ratios}
\frac{(\Prob[Y_1>x])^2}{\Prob[V_1\wedge V_2>x]} \to 0, \quad (x\to\infty).
\end{equation}
and it is sufficient that
$$\frac{\alpha_0}{2}<\alpha <\alpha_0.$$
This is seen by noting that for $Y_1,Y_2$ iid index $\alpha$, \eqref{eq:nononanette} is
\begin{align*}
t\Bigl( 
\Prob[Y_1>b_0(t)x]\Bigr)^2=&
t\Bigl( \Prob[Y_1>b\bigl( b^\leftarrow(b_0(t))\bigr)x]\Bigr)^2\\
=& \frac{t}{ b^\leftarrow(b_0(t))}\Bigl( b^\leftarrow(b_0(t))
\Prob[Y_1>b\bigl( b^\leftarrow(b_0(t))\bigr)x]\Bigr)^2\\
\intertext{and since $ b^\leftarrow(b_0(t)) \to \infty$ and $b(\cdot)$
  is the scaling function of $Y_1$, this is
  asymptotic to}
\sim & \frac{t}{ b^\leftarrow(b_0(t))} x^{-2\alpha}.
\end{align*}
We need $\lim_{t\to\infty} {t}/{ b^\leftarrow(b_0(t))}  =0$ and
unwinding this condition yields \eqref{eq:ratios}. 

\begin{proof} As in Proposition \ref{prop:1}, we focus on the HRV claim.
Again assume $f\in \mathcal{C}((0,\infty)^2) $
and $f$ is bounded by $\|f\|$, uniformly
continuous with
$$f(\bx)=0, \quad \text{ if }x_1 \wedge x_2<\eta,$$
for some $\eta >0$. We  need to show \eqref{eq:ditchY}. For any
small $\delta>0$ {with $\delta<\eta$}, the
absolute value of the difference in \eqref{eq:ditchY} is 
$$
tE|\cdot|1_{[Y_1\vee Y_2 >b_0(t) \delta]}
+
tE|\cdot|1_{[Y_1\vee Y_2 <b_0(t) \delta, V_1\wedge V_2>b_0(t)(\eta
  -\delta)]} =I+II, 
$$
since for the second term, the only way the difference can be non-zero
is if $\bV$ is sufficiently large. Term II is dominated by
\begin{align*}
{II} \leq & \omega_f (\delta) t \Prob[ V_1\wedge V_2>b_0 (t)(\eta-\delta)]\\
\sim & \omega_f(\delta ) (const) (\eta- \delta)^{-\alpha_0}, \quad
(t\to\infty)\\
\to & 0, \qquad \text{ as }\delta \to 0.
\end{align*}
For $I$ we have
\begin{align*}
I\leq &tE|\cdot|\Bigl(1_{[Y_1\wedge Y_2> b_0(t)\delta]]}
+1_{[Y_1> b_0(t)\delta, Y_2\leq b_0(t)\delta]]}
+1_{[Y_2>b_0(t)\delta, Y_1< b_0(t)\delta]]}
\Bigr)
\\
=&Ia + Ib + Ic.\end{align*}
The term $Ia$ can be quickly killed,
$$Ia \leq 2 \|f\|  t\Prob[Y_1\wedge Y_2> b_0(t) \delta] \to 0, \quad
(t\to\infty)$$
from \eqref{eq:nononanette}. The term $Ib$ is dominated by 
\begin{align*}
Ib\leq & 2\|f\| t \Prob[Y_1 > b_0(t) \delta , V_2> b_0(t)(\eta -\delta)]\\
=  & 2\|f\| t \Prob[ V_2> b_0(t)(\eta -\delta)] \Prob[Y_1 > b_0(t) \delta ]\\
\sim & \|f\| (\eta -\delta)^{-\alpha_0} \Prob[Y_1 > b_0(t) \delta ]\quad
(t\to\infty),\\
\to & 0 \qquad (t\to\infty).
\end{align*}
Term $Ic$ is handled similarly.
\end{proof}

\subsubsection{What happens if $\bY$ has no HRV but  $\bV$ has HRV}
A problem with the additive model is the tail weights contributing to
MRV on $\E$ and HRV on $\E_0$ can be confounded between $\bY$ and
$\bV$ and it is possible for $\bV$ to have MRV on $\E$, 
HRV on $\E_0$ but the hidden measure of $\bZ=\bY+\bV$ is not the 
hidden measure of $\bV$.

To focus on the influence of $\bV$, we again assume $\bY$ has the
structure \eqref{eq:YhugAxes} used in Proposition \ref{prop:1}.

\begin{prop}\label{prop:3}
Suppose
\begin{enumerate}
\item $\bY$ has form \eqref{eq:YhugAxes} where $\xi_1,\xi_2$ are iid,
  each with distributions having regularly varying tails with index
  $\alpha$ and scaling function $b(t)$.
\item $\bV$ has both MRV on $\E$ and HRV on $\E_0$:
\begin{enumerate}
\item $\bV \in  \MRV(\alpha_*, b_*(t), \nu, \E)$ and has asymptotic independence.
\item $\bV \in \MRV(\alpha_0, b_0(t), \nu_0,\E_0)$.
\end{enumerate}
\item The parameters $\alpha, \,\alpha_*,\,\alpha_0$ are related by 
$\alpha\leq \alpha_* \leq \alpha_0$ and the scaling functions $b(t),
b_*(t), b_0(t) $ satisfy $b_*(t)=o(b(t)), \, b_0(t)=o(b_*(t))$.
\item Define a scaling function $h(t)$ through its inverse
  $h^\leftarrow(t)$ by
\begin{equation}\label{eq:defh}
h^\leftarrow (t) =:b^\leftarrow (t) b_* ^\leftarrow (t) \sim (const)\frac{1}{\Prob[\xi_1>t]\Prob[V_1>t]}.
\end{equation}
\end{enumerate}
Then
\begin{enumerate}
\item\label{item:case1} If 
\begin{equation}\label{eq:2heavy}
h(t)/b_0(t) \to \infty,
\end{equation}
 $\bZ \in  \MRV(\alpha, b(t),\nu, \E)$ with asymptotic independence 
   and has HRV on $\E_0$ with index $\alpha+\alpha_*$ and 
 limit measure (different than the
  hidden measure of $\bV$):
\begin{equation}\label{eq:notNu0}
\nu_{\bZ, \text{hidden}}:=\frac 12 \bigl(\nu_\alpha\times
\nu_{\alpha_*} +  \nu_{\alpha_*} \times
 \nu_\alpha\bigr).
\end{equation}
A sufficient condition for \eqref{eq:2heavy} is $\alpha_*<\alpha_0 - \alpha.$
\item If 
\begin{equation}\label{eq:2light} 
h(t)/b_0(t) \to 0,
\end{equation}
then $\bZ \in \MRV(\alpha,b(t), \nu, \E) \cap
  \MRV(\alpha_0,b_0(t),\nu_0,  \E_0)$ and
$\bZ$  has asymptotic independence and has HRV
and the hidden limit measure $\nu_0$ of $\bZ$ is  the hidden measure of $\bV$.
A sufficient condition for \eqref{eq:2light} is
$\alpha_* >\alpha_0 -\alpha $
\item If 
\begin{equation}\label{eq:justRight} 
h(t)/b_0(t) \to c \in (0,\infty),
\end{equation}
then $\bZ \in  \MRV(\alpha,b(t),\nu, \E$) with asymptotic independence
and $\bZ$ has HRV with index
$\alpha+\alpha_*$ and hidden measure which is a sum of the
measure given in \eqref{eq:notNu0} and $\nu_0$, the hidden
measure of $\bV$,
\begin{align}\label{eq:mixallnus} 
 \nu_{\bZ}& =\frac 12 \bigl(\nu_\alpha\times
\nu_{\alpha_*} +  \nu_{\alpha_*} \times
 \nu_\alpha\bigr) + \nu_0.
\end{align}
A sufficient condition  for \eqref{eq:2light} is
$\alpha_*=\alpha_0 -\alpha.$
\end{enumerate}
\end{prop}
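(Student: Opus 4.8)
The plan is to follow the template of Propositions \ref{prop:1} and \ref{prop:2}. The claim $\bZ\in\MRV(\alpha,b(t),\nu,\E)$ with asymptotic independence is the easy half and follows from the standard facts about sums of independent regularly varying vectors cited in the proof of Proposition \ref{prop:1}: since $b_*(t)=o(b(t))$, the lighter summand $\bV$ is negligible at scale $b(t)$, so $\bZ/b(t)$ and $\bY/b(t)$ have the same $\M(\E)$-limit $\nu$, and $\bY$ concentrates on the axes. All the work is in the HRV statement, and the three cases are the same computation read off at different scales, so I would treat them together.

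For HRV I would invoke criterion (ii) of the Portmanteau theorem and fix $f\in\mathcal{C}((0,\infty)^2)$, bounded by $\|f\|$, uniformly continuous with modulus $\omega_f$, vanishing when $x_1\wedge x_2<\eta$. Conditioning on the Bernoulli $B_1$ of \eqref{eq:YhugAxes} splits $t\bfE f(\bZ/c(t))$ into the $\{B_1=1\}$ contribution $\tfrac12\,t\bfE f\bigl((\xi_1+V_1)/c(t),V_2/c(t)\bigr)$ and its mirror image under coordinate swap; by symmetry it suffices to analyze the first, where $\xi_1\independent(V_1,V_2)$ and $c(t)$ is the HRV scale. Both coordinates of $\bZ$ can be large for two reasons: (A) $\bV$'s own diagonal, $V_1\wedge V_2$ large, which is $\bV$'s HRV at scale $b_0(t)$, index $\alpha_0$, limit $\nu_0$; and (B) the \emph{interaction}, $\xi_1$ large pushing the first coordinate while $V_2$ large (through $\bV$'s $\E$-marginal, index $\alpha_*$) pushes the second. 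Because $\xi_1\independent V_2$,
$$t\Prob[\xi_1>c(t)x,\,V_2>c(t)y]=t\Prob[\xi_1>c(t)x]\,\Prob[V_2>c(t)y],$$
and using the tail--quantile relations $b^\leftarrow(s)\sim 1/\Prob[\xi_1>s]$ and $b_*^\leftarrow(s)\sim(const)/\Prob[V_2>s]$ together with \eqref{eq:defh}, this product converges, at scale $c(t)=h(t)$, to $(const)\,x^{-\alpha}y^{-\alpha_*}$. Thus mechanism (B) is HRV at scale $h(t)$, index $\alpha+\alpha_*$, with product limit $\nu_\alpha\times\nu_{\alpha_*}$, and the mirror term supplies $\nu_{\alpha_*}\times\nu_\alpha$, explaining the $\tfrac12$-average in \eqref{eq:notNu0}.

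The execution is a decomposition of the integrand by the size of $\xi_1$: $t\bfE f(\cdots)=t\bfE[\,\cdots\,1_{[\xi_1\le\delta c(t)]}]+t\bfE[\,\cdots\,1_{[\xi_1>\delta c(t)]}]$ for small $\delta<\eta$. On $\{\xi_1\le\delta c(t)\}$ I replace $\xi_1+V_1$ by $V_1$ at cost $\omega_f(\delta)$ and bound by $\bV$'s HRV, recovering mechanism (A); on $\{\xi_1>\delta c(t)\}$ the support forces $V_2>\eta c(t)$, and factoring through independence recovers mechanism (B). Choosing $c(t)=h(t)\vee b_0(t)$ and letting $t\to\infty$ then $\delta\to0$, the relative size of $h$ and $b_0$ decides the outcome. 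If $h(t)/b_0(t)\to\infty$ (Case \ref{item:case1}) the scale is $h(t)$ and mechanism (A) is killed because $t\Prob[V_1\wedge V_2>h(t)x]\sim(const)\,(h(t)/b_0(t))^{-\alpha_0}x^{-\alpha_0}\to0$, leaving exactly \eqref{eq:notNu0}; if $h(t)/b_0(t)\to0$ the scale is $b_0(t)$ and the product tail (B) is negligible after multiplication by $t$, leaving $\nu_0$; and if $h(t)/b_0(t)\to c\in(0,\infty)$ both survive at the common scale and add, giving \eqref{eq:mixallnus}. Reading each limit against the displays \eqref{eq:G1}--\eqref{eq:Gmix} produces the stated hidden measures.

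I expect the main obstacle to be controlling the overlap of the two mechanisms and the additive coupling in the first coordinate: one must show the event where $\xi_1$ is large \emph{and} $V_1\wedge V_2$ is simultaneously large contributes nothing (a ``no two big contributions'' estimate), and that on $\{\xi_1>\delta c(t)\}$ the replacement $f(\xi_1+V_1,V_2)\approx f(\xi_1,V_2)$ is legitimate even when $V_1$ is itself of order $c(t)$. Both reduce to showing the doubly-large region has $t$-probability of strictly smaller order, which is exactly where the index ordering $\alpha\le\alpha_*\le\alpha_0$ and the balance encoded in \eqref{eq:defh} enter; the sufficient conditions comparing $\alpha_*$ with $\alpha_0-\alpha$ are then just the statement that $\alpha+\alpha_*$ is less than, greater than, or equal to $\alpha_0$. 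Finally one checks $b_0(t)=o(b(t))$ and $h(t)=o(b(t))$, so the HRV scale sits genuinely below the MRV scale and the two regimes do not interfere.
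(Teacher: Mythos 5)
Your proposal follows essentially the same route as the paper's proof: the Portmanteau criterion with a uniformly continuous test function supported away from the axes, the split induced by the Bernoulli variable in \eqref{eq:YhugAxes} into two symmetric terms, the key product-tail limit $t\Prob[\xi_1>h(t)x,\,V_2>h(t)y]\to (const)\,x^{-\alpha}y^{-\alpha_*}$ obtained by factoring through independence and the definition \eqref{eq:defh} of $h$, and the further decomposition by the sizes of $\xi_1$ and $V_1$ that separates the interaction mechanism from $\bV$'s own hidden regular variation. The differences are purely organizational (you unify the three cases at the scale $h(t)\vee b_0(t)$ whereas the paper treats them separately, and in case (1) its primary split is on $V_1$ rather than $\xi_1$), and the obstacles you flag --- the ``no two big contributions'' bound and the legitimacy of replacing $f(\xi_1+V_1,V_2)$ by $f(\xi_1,V_2)$ --- are exactly the estimates the paper carries out.
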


\begin{proof}
Begin with the following observations for all cases: As $t\to\infty$,
\begin{align}
&t\Prob[\Bigl(\frac{\xi_1}{h(t)},  \frac{V_2}{h(t)} \Bigr) \in \cdot \,] \to
\nu_\alpha \times \nu_{\alpha_*}\label{eq:obs1}\\
&t\Prob[\Bigl( \frac{V_1}{h(t)} ,\frac{\xi_2}{h(t)}\Bigr) \in \cdot \,] \to
\nu_{\alpha_*} \times \nu_{\alpha}\label{eq:obs2}
\end{align}
in $\M((0,\infty)^2)$. To see this, write for $x>0,\,y>0$,
\begin{align*}
t\Prob[\xi_1>h(t)x, &V_2>h(t)y]= t  \Prob[\xi_1>b\circ b^\leftarrow (h)x]
 \Prob[V_2>b_*\circ b_*^\leftarrow (h)y]\\
=&\frac{t}{ b^\leftarrow (h)  b_*^\leftarrow (h)             }
b^\leftarrow (h)  \Prob[\xi_1>b\circ b^\leftarrow (h)x]\\
&\qquad \qquad b_*^\leftarrow (h) \Prob[V_2>b_*\circ b_*^\leftarrow (h)y]\\
\sim &
\frac{t}{ b^\leftarrow (h)  b_*^\leftarrow (h)             } x^{-\alpha}y^{-\alpha_*}\\
\sim &\nu_\alpha (x,\infty) \nu_{\alpha_*} (y,\infty).
\end{align*}
The proof of \eqref{eq:obs2} is the same.

Now assume $f\in \mathcal{C}((0,\infty)^2) $
and $f$ is bounded by $\|f\|$, uniformly
continuous with
$$f(\bx)=0, \quad \text{ if }x_1 \wedge x_2<\eta,$$
for some $\eta >0$.  Write
\begin{equation}\label{eq:pieces}
tEf\Bigl(\frac{\bY+\bV}{h(t)}\Bigr)=
\frac t2Ef\Bigl(\frac{\xi_1+V_1}{h(t)}, \frac{V_2}{h(t)}\Bigr)
+\frac t2 Ef\Bigl(\frac{V_1}{h(t)}, \frac{\xi_2+V_2}{h(t)}\Bigr) =A+B
\end{equation}

For case (1) where \eqref{eq:2heavy} holds, we get a limit for $A$ by writing
\begin{align*}
tE\Bigl|f\Bigl(\frac{\xi_1+V_1}{h(t)}, \frac{V_2}{h(t)}\Bigr)
-&f\Bigl(\frac{\xi_1}{\h},\frac{V_2}{\h}\Bigr) \Bigr|
=tE\Bigl| \cdot \Bigr| 1_{[V_1<\h \delta, \xi_1>\h (\eta -\delta),
  V_2>\h \eta ]}\\
&+tE\Bigl| \cdot \Bigr| 1_{V_1>\h \delta,  V_2>\h \eta ]} =I+II.
\end{align*}
Now 
\begin{align*}
I \leq &  \omega_f(\delta) t \Prob[ \xi_1>\h (\eta -\delta),
  V_2>\h \eta ]\\
\to & \omega_f(\delta) \nu_\alpha (
(\eta-\delta),\infty)\nu_{\alpha_*} ( \eta,\infty)\\
\intertext{from \eqref{eq:obs1}}
\to & 0 \qquad (\delta \to 0).
\end{align*}
We can control $II$ by observing
\begin{align*}
II \leq & 2\|f\| t\Prob[V_1>\h \delta,  V_2>\h \eta]\\
\leq & 2\|f\| \frac{t}{ b_0^\leftarrow(  \h )          }           b_0^\leftarrow(  \h )  \Prob[V_1 \wedge V_2> b_0\circ b_0^\leftarrow(  \h ) \delta \wedge \eta]
\\
\to & 0 \qquad (t\to\infty),
\end{align*}
from \eqref{eq:2heavy}. The second term of \eqref{eq:notNu0} comes
from $B$ in a similar way to the derivation of $A$, relying on
\eqref{eq:obs2}. This completes case (1) where \eqref{eq:2heavy} holds.

For Case (2) when \eqref{eq:2light} holds, replace $\h$ with
$b_0(t)$ in \eqref{eq:pieces} and focus on $A$.  We compare with $f(\bV/b_0(t))$:
\begin{align*}
t\bfE\Bigl| f\Bigl(&\frac{\xi_1+V_1}{b_0(t)}, \frac{V_2}{b_0(t)}                 \Bigr) -
f\Bigl(\frac{\bV}{b_0(t)}\Bigr) \Bigr|\\
=&t\bfE|\cdot |1_{[\xi_1<b_0(t) \delta, V_1>b_0(\eta-\delta), V_2>b_0
  \eta ]}+
t\bfE|\cdot |1_{[\xi_1>b_0(t) \delta, V_2>b_0  \eta ]}= I+II.
\end{align*}
Since $t\bfE f(\bV/b_0(t)) \to \int f d\nu_0, $ we only have to show that
both $I$ and $II$ go to zero. For $I$ we have
\begin{align*}
I \leq & \omega_f(\delta) t\Prob[ V_1\wedge V_2 > b_o(t) (\eta
-\delta)\wedge \eta] \to \omega_f(\delta) ((\eta-\delta)\wedge \eta )^{-\alpha_0}\\
\to & 0 \qquad (\delta \to 0).
\end{align*}
Also using \eqref{eq:2light},
\begin{align*}
II \leq 2 \|f\| \frac{t}{b^\leftarrow (b_0)           b_*^\leftarrow(b_0)}
\Bigl(b^\leftarrow (b_0) \Prob[\xi_1>b\circ b^\leftarrow (b_0)
\delta]
b_*^\leftarrow(b_0) \Prob[V_2> b_*\circ b_*^\leftarrow (b_0) \eta
]\Bigr)\\
\sim (const) \frac{t}{b^\leftarrow (b_0)
  b_*^\leftarrow(b_0)} =(const) \frac{t}{h^\leftarrow (b_0)} \to 0.
\end{align*}
We can deal with the term $B$ similarly so this completes treatment of
Case (2).

Now consider Case (3) where \eqref{eq:justRight} holds. Again replace
$\h$ by $b_0(t)$ in \eqref{eq:pieces} and consider $A$. Write
\begin{align*}
2A=& t\bfE f\Bigl(\frac{\xi_1+V_1}{h(t)}, \frac{V_2}{h(t)}\Bigr)
\Bigl(1_{[\xi_1 \leq b_0(t) \delta]} + 1_{[\xi_1 > b_0(t) \delta]}
\Bigr)\\
=& t\bfE f\Bigl(\frac{\xi_1+V_1}{h(t)}, \frac{V_2}{h(t)}\Bigr)
-f\Bigl(\frac{\bV}{h(t)}\Bigr) 1_{[\xi_1 \leq b_0(t) \delta]} \\
&\qquad +  t\bfE f\Bigl(\frac{\xi_1+V_1}{h(t)}, \frac{V_2}{h(t)}\Bigr) 
-f\Bigl(\frac{\xi_1}{h(t)}, \frac{V_2}{h(t)}\Bigr) 1_{[\xi_1 > b_0(t)
  \delta]} \\
&\qquad + tEf(\bV/b_0(t))1_{[\xi_1 \leq b_0(t) \delta]}  
+t\bfE f\Bigl(\frac{\xi_1}{h(t)}, \frac{V_2}{h(t)}\Bigr) 1_{[\xi_1 > b_0(t)
  \delta]} \\
=& a+b+c+d.
\end{align*}
We have $c \to \int f(\bx) \nu_0 (d\bx)$ since $\Prob[\xi_1 \leq b_0(t)
\delta] \to 1.$ For $d$ note
$$d=t\bfE f\Bigl(\frac{\xi_1}{h(t)}, \frac{V_2}{h(t)}\Bigr) \to \int f
d\nu_\alpha \times \nu_{\alpha*}$$
using \eqref{eq:obs1} and the fact that \eqref{eq:justRight} is
equivalent to $h^\leftarrow (t)/b_0^\leftarrow (t) \to c^{-1}.$
Take the absolute value of $a$ and add to the indicator the event
$[V_1>b_0(t)(\eta -\delta)] $ (otherwise both terms in the difference
are zero) and 
\begin{align*}
|a|\leq & \omega_f(\delta) t\Prob[ V_1\wedge V_2 >b_0(t) (\eta -\delta)]\\
\to & \omega_f(\delta)  (\eta-\delta)^{-\alpha_0} \quad (t\to\infty)\\
\to & 0 \qquad (\delta \to 0).
\end{align*}
For $b$ write
\begin{align*}
|b| \leq & t\bfE \Bigl|f\Bigl(\frac{\xi_1+V_1}{h(t)}, \frac{V_2}{h(t)}\Bigr) 
-f\Bigl(\frac{\xi_1}{h(t)}, \frac{V_2}{h(t)}\Bigr)\Bigr| 1_{[\xi_1 > b_0(t)
  \delta, V_1\leq b_0(t) \delta]} \\
& \qquad +t\bfE |\cdot| 1_{[\xi_1 > b_0(t)
  \delta, V_1 >b_0(t) \delta]} =|b1|+|b2|.
\end{align*}
We dominate $|b1|$ by using the modulus of continuity
\begin{align*}
|b1|\leq & \omega_f(\delta) t\Prob[\xi_1 > b_0(t)
  \delta, V_2> b_0(t) \eta]
\end{align*}
where we added the condition on $V_2$ because otherwise, the probability would be
  zero due to the support of $f$ being bounded away from the axes. Let
  $t\to\infty$, apply \eqref{eq:obs1} and condition
  \eqref{eq:justRight} and then let $\delta \to 0$. Dominate $|b2|$ by
\begin{align*}
|b2| \leq &
 2\|f\| \Prob[\xi>b_0(t)\delta] t \Prob[V_1\wedge V_2 >b_0(t) \delta]\\
\sim & (const) \delta^{-\alpha_0}  \Prob[\xi>b_0(t)\delta]  \to 0 \quad (t\to\infty).
\end{align*}
The terms involving $B$ are handled similarly.
\end{proof}

\begin{exm}\label{eg:WhatAMess}
  \begin{figure}[h]
  \begin{centering}
 \includegraphics[width=10cm]{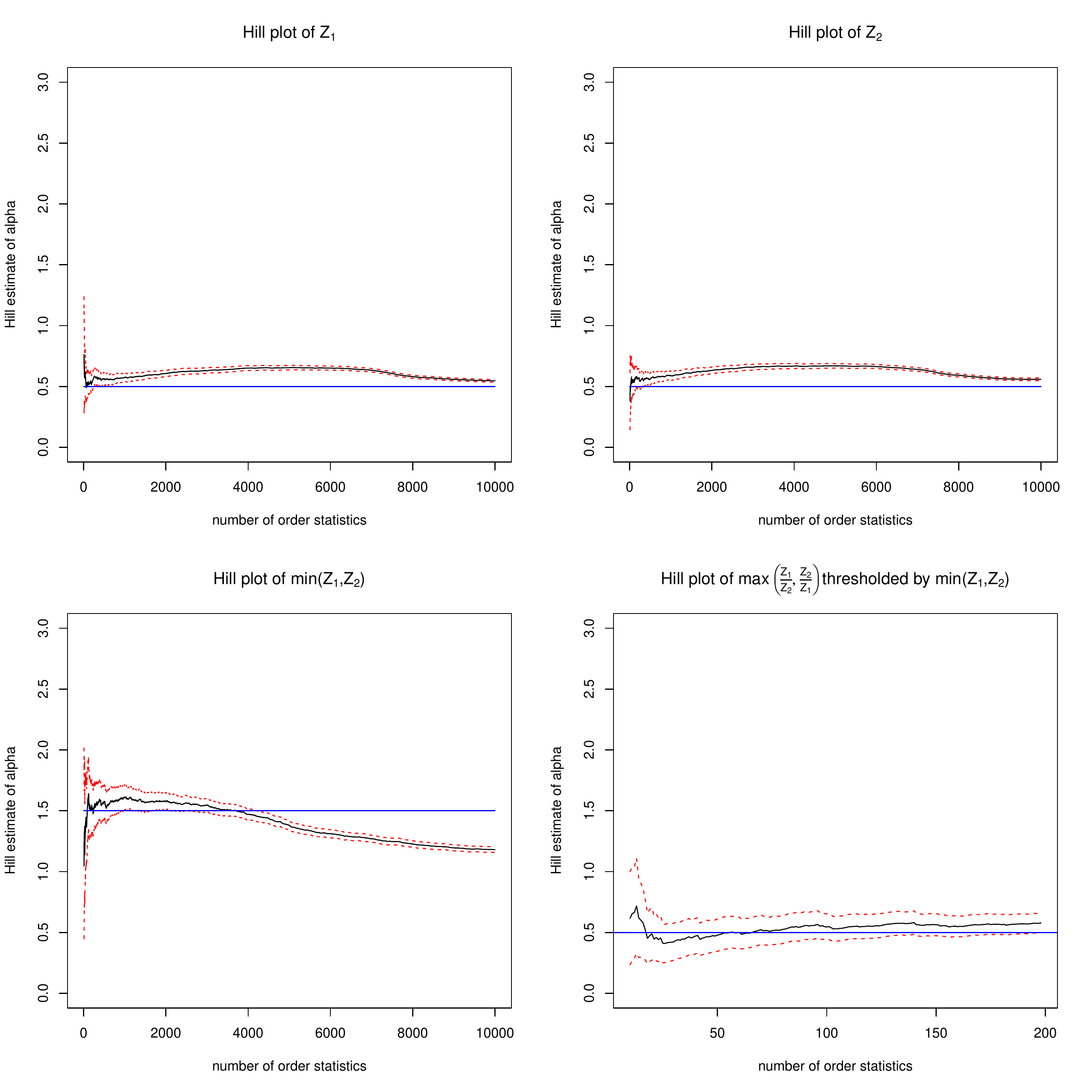}
  \end{centering}
  \caption{Exploratory plots for Example \ref{eg:WhatAMess}, Case 1,
    with $\alpha=0.5, \alpha^*=1, \alpha_0=2$. {\it Top panel:} Hill
    plots for the marginals $Z_1$ and $Z_2$. {\it Bottom left:} Hill
    plot for $\min\{Z_1,Z_2\}$. {\it Bottom right:} Hill plot for
    $\max\{Z_1/Z_2,Z_2/Z_1\}$ thresholded by the 200 largest values of
    $\min \{Z_1,Z_2\}$.}\label{fig:Ex321} 
  \end{figure} We illustrate instances of the three
  cases given in Proposition \ref{prop:3}. {We simulate data samples
    from three different regimes 
  as discussed in the Proposition \ref{prop:3} and estimate
  back the parameters of the additive model from which the data was
  generated. 
\begin{description}
\item[Case 1]     ${\alpha_*<\alpha_0 -\alpha}$.
 Let $\alpha=0.5,\, \alpha_* =1,\,
  \alpha_0=2$ and then 
$\alpha_*=1<1.5=\alpha_0-\alpha$. Let $\bY$ have the form
\eqref{eq:YhugAxes}
where $\xi_1,\xi_2$ are iid Pareto random variables with parameter $\alpha=0.5$. For $\bV$ it is
simplest to take $\bV=(V_1,V_2)$ iid Pareto $\alpha^*=1$ random
variables and hence we do so. Then $\alpha_0$ is 
the index of $V_1\wedge V_2$ and so $\alpha_0=2$. It is easy to see
that 
$\bZ= \bY + \bV \in \MRV(\alpha=0.5,t^{2}, \epsilon_{\{0\}} \times
\nu_{1/2} + \nu_{1/2}\times \epsilon_{\{0\}}, \E)$ with asymptotic
independence of the marginals. 

To verify that $\bZ \in
\MRV(\alpha+\alpha_*,t^{1/(\alpha+\alpha_*)},\nu_{\bZ, \text{hidden}},\E_0)=
\MRV(3/2, t^{2/3},\nu_{\bZ, \text{hidden}},\E_0)$ ab initio, take $\bz>\bzero$ 
and then
\begin{align*}
t\Prob[\bZ>t^{2/3}\bz]=&\frac t2 \Prob[\xi_1+V_1>t^{2/3}z_1,V_2>t^{2/3}z_2]\\
&\qquad+\frac t2 \Prob[V_1>t^{2/3}z_1,\xi_2+V_2>t^{2/3}z_2] =I+II.
\end{align*}
Focus on $I$ as treatment of $II$ is almost the same. We have
\begin{align*}
2I \sim & t ( t^{2/3}z_1)^{-1/2}
(t^{2/3}z_2)^{-1}=tt^{-2/3}t^{-1/3}z_1^{-1/2}z_2^{-1} \\
=&z_1^{-1/2}z_2^{-1},
\end{align*}
which is the first piece of the limit in \eqref{eq:notNu0}.

Hence we can check that the limit measure $\nu_{\bZ, \text{hidden}}$ in \eqref{eq:notNu0} has density
$$ \frac 14 z_1^{-3/2} z_2^{-2} + 
\frac 14 z_1^{-2} z_2^{-3/2} , \quad z_1>0, z_2>0$$
from which one can readily compute $G_1$ from \eqref{eq:G1} for $s>1$ as
$$\bar G_1(s) = \nu_{\bZ, \text{hidden}} \{\bz \in \E_0: 
z_1/z_2>s\}
=(const) s^{-1/2}.
$$
A similar calculation will lead to $G_2(s) = (const) s^{-1/2}, s>1$ meaning both $G_1$ and $G_2$ have regularly varying tail distributions with 
index $1/2$. In fact they are both Pareto (1/2) distributions.
 \begin{figure}[h]
  \begin{centering}
 \includegraphics[width=10cm]{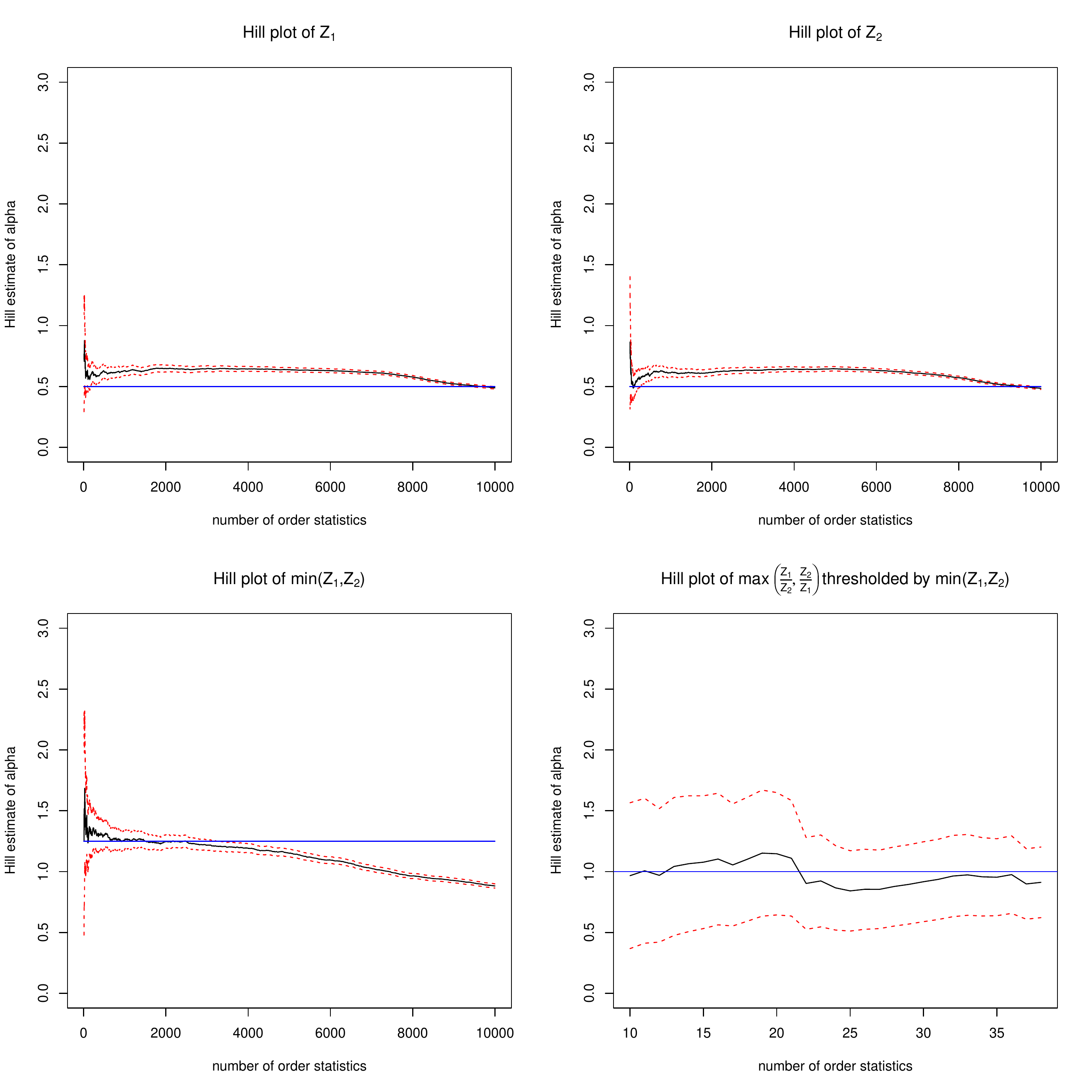}
  \end{centering}
  \caption{Exploratory plots for Example \ref{eg:WhatAMess}, Case 2,
    with $\alpha=0.5, \alpha^*=1, \alpha_0=1.25$. {\it Top panel:}
    Hill plots for the marginals $Z_1$ and $Z_2$. {\it Bottom left:}
    Hill plot for $\min\{Z_1,Z_2\}$. {\it Bottom right:} Hill plot for
    $\max\{Z_1/Z_2,Z_2/Z_1\}$ thresholded by the 200 largest values of
    $\min \{Z_1,Z_2\}$.}\label{fig:Ex322} 
  \end{figure}
We generate 10000 iid samples following the construction of $\bZ=\bY
+\bV$ described above and check whether we can estimate 
the regular variation index $\alpha=0.5$, the hidden regular variation index $\alpha+\alpha^*=1.5$ and the tail index of $G_1$ and $G_2$ from the sample.
Figure \ref{fig:Ex321} shows Hill plots for $Z_1$ and $Z_2$ in the top
panel, both of which indicate that the marginals  are heavy tailed
with parameter $\alpha=0.5$. 
The Hill plot of $\min\{Z_1,Z_2\}$ correctly identifies the HRV
parameter $\alpha+\alpha^*=1.5$. The final Hill plot of
$\max\{Z_1/Z_2,Z_2/Z_1\}$ for the 200 highest order statistics of
$\min \{Z_1,Z_2\}$ clearly indicates a  heavy tail with a tail index of $1/2$ for
both $G_1$ and $G_2$. Note since $G_1=G_2$, 
\eqref{eq:Gmix}
allows doing the estimation using the thresholded maxima of the component
ratios.

\item[Case 2]   $\alpha+\alpha_*>\alpha_0$.
 Let $\alpha=0.5,\, \alpha_* =1,\, \alpha_0=1.25$ and then
$\alpha_*=1 > 0.75 =\alpha_0-\alpha$. We generate $\bY$ in
exactly the same way as in Case 1. For $\bV$ we generate $R$, a Pareto
$\alpha_0=1.25$ random variable, $B$ a Bernoulli (1/2) random variable
and $\theta$ a Pareto $\alpha^*=1$ random variable. Now define: 
$$\bV= BR(\theta,1) + (1-B)R(1,\theta).$$ As in
 Case 1,   $\bZ = \bY + \bV \in
\MRV(\alpha=0.5,t^{2}, \epsilon_{\{0\}} \times
\nu_{1/2} + \nu_{1/2}\times \epsilon_{\{0\}} , \E)$ 
 and furthermore $\bZ = \bY + \bV \in 
\MRV(\alpha_0,t^{1/\alpha_0},\E_0)=
\MRV(1.25, t^{1/1.25},\E_0)$. Moreover by construction we have $G_1(s)
= G_2(s)=  s^{-1}, s>1$. Of course this is also clear from
Proposition \ref{prop:3}. 

We generate 10000 iid samples using the construction of $\bZ=\bY
+\bV$ and from this sample we estimate the regular variation
index $\alpha=0.5$, the hidden regular variation index $\alpha_0=1.25$
and the tail index of $G_1$ and $G_2$ which is $1$.   The top panels
in Figure
\ref{fig:Ex322}  display Hill plots for $Z_1$ and $Z_2$ that
indicate
the same tail
index of $\alpha=0.5$. The Hill plot for $\min\{Z_1,Z_2\}$
correctly indicates  a tail index of $\alpha_0=1.25$.  Finally, the Hill plot of
$\max\{Z_1/Z_2,Z_2/Z_1\}$ for the 200 highest order statistics of
$\min \{Z_1,Z_2\}$ indicates a tail index of $\alpha^*=1$ for both
$G_1\equiv G_2$.

 \begin{figure}[h]
  \begin{centering}
 \includegraphics[width=10cm]{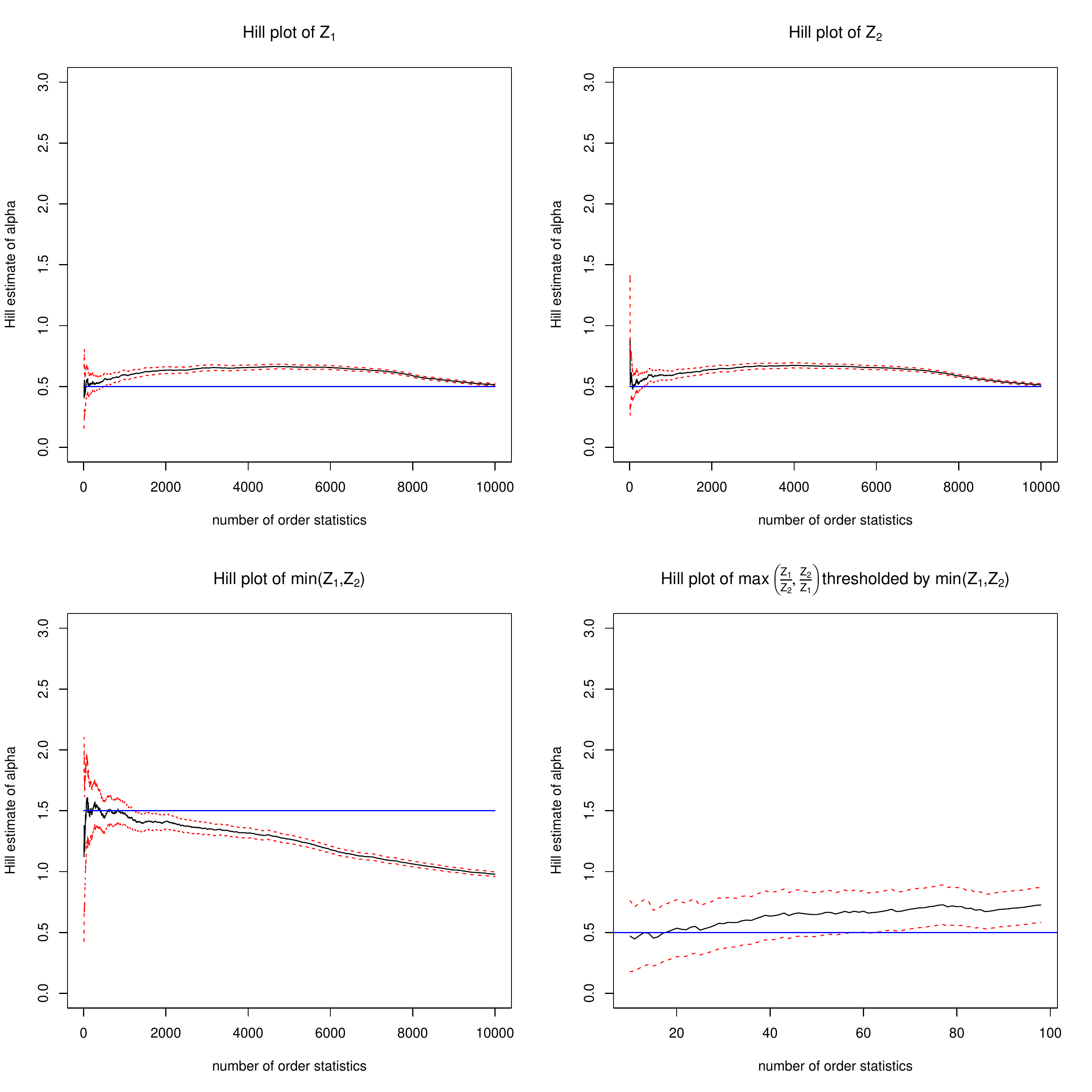}
  \end{centering}
  \caption{Exploratory plots for Example \ref{eg:WhatAMess}, Case 3,
    with $\alpha=0.5, \alpha^*=1, \alpha_0=1.5$. {\it Top panel:}
    Hill plots for the marginals $Z_1$ and $Z_2$. {\it Bottom left:}
    Hill plot for $\min\{Z_1,Z_2\}$. {\it Bottom right:} Hill plot for
    $\max\{Z_1/Z_2,Z_2/Z_1\}$ thresholded by the 200 largest values of
    $\min \{Z_1,Z_2\}$.}\label{fig:Ex323} 
  \end{figure}

\item[Case 3]  $\alpha+\alpha_*=\alpha_0$.
 Let $\alpha=0.5,\, \alpha_* =1,\, \alpha_0=1.5$ which satisfies
 $\alpha+\alpha_*=1.5=\alpha_0$. We generate $\bY$ as in
 Case 1 or 2 and generate $\bV$ using the method of Case 2,
 except that now $R$ is generated from a Pareto $\alpha_0=1.5$
 distribution.  
We verify that $\bZ = \bY + \bV \in \MRV(\alpha=0.5,t^{2}, \epsilon_{\{0\}} \times
\nu_{1/2} + \nu_{1/2}\times \epsilon_{\{0\}}, \E)$
and  $\bZ = \bY + \bV \in
\MRV(1.5,t^{1/1.5},  \nu_{\bZ}  ,\E_0)$. Getting the distribution of
$G_1$ and $G_2$ is more difficult in this case since the hidden limit
measure for $\bZ$ is more complicated as can be seen in
\eqref{eq:mixallnus}. A careful calculation shows that $G_1$ and $G_2$
have regularly varying tails with index $0.5$.   

We generate 10000 iid samples of $\bZ=\bY+\bV$
using this model .  
In Figure \ref{fig:Ex323}  the Hill plots for
$Z_1$ and $Z_2$ are in the neighborhood of 
$\alpha=0.5$ and the Hill plot for
$\min\{Z_1,Z_2\}$ 
correctly  indicates
 a tail index of $\alpha_0=1.5$ 
The  Hill plot of  $\max\{Z_1/Z_2,Z_2/Z_1\}$ for the 200 highest order
statistics of $\min \{Z_1,Z_2\}$ indicates a tail index of
$\alpha^*=0.5$ for both $G_1\equiv G_2$ which was what we were
expecting. 
\end{description}
}
\qed \end{exm}

\section{Detection and estimation: regular variation and hidden regular variation}\label{sec:detect}

What diagnostic tools exist to help us verify that multivariate data
come from a distribution possessing regular variation on some
domain?  Since regular variation is only an asymptotic tail property,
the task of deciding to use a multivariate regularly varying model is
challenging. 

Suppose we have $\bZ = (Z_1, Z_2)$ multivariate regularly varying on
$\E=[0,\infty)^2\setminus \{\bzero\}$. Under the transformation
$\gpolar$ as defined in \eqref{eq:defgpolar},
 $\|\bZ\|$ is regularly varying with some tail index $\alpha$ and
 \eqref{eq:limMeasPolar} holds. Diagnostics that investigate if $\bZ$ is regularly
 varying  often reduce the data to one dimension for instance
by taking norms or  max-linear combinations of $\bZ$ \citep[Chapter
 8]{resnickbook:2007} and then {apply} one dimensional heavy tail
  diagnostics  such as Hill or
 QQ plotting. We
 propose further diagnostics for the viability of a
 multivariate regularly varying model using the $\gpolar$
 transformation since $\gpolar $
 converts a
 regularly varying model to a {\it conditional extreme value\/}  (CEV) model
for which detection techniques exist \citep{das:resnick:2011b}.

\subsection{Detecting multivariate regular variation using the CEV model}\label{subset:detect:RVandCEV}
The {\it conditional extreme value model}
\cite{heffernan:resnick:2007,das:resnick:2011,das:resnick:2011b}
requires at least one of the marginals of the distribution  be  in the domain of
attraction of an extreme value distribution. In this section we
discuss  a modified version of the CEV model for bivariate random
vectors in the non-negative orthant where convergences are described
according to the notion of $\M$-convergence
\citep{lindskog:resnick:roy:2013,das:mitra:resnick:2013}. Define
$$\E_{\sqsupset}:=(0,\infty)\times[0,\infty) = [0,\infty)^2\setminus
\left([0,\infty)\times\{0\}\right).$$ 

\begin{dfn}\label{CEV_defn}
 {\rm{
 Suppose $(\xi,\eta) \in \R_{+}^2$ is a random vector and there exist functions   $a(t) \to \infty,\, b(t)>0$ for $t> 0$ and a non-null 
 measure $\mu \in \M(\E_{\sqsupset})$ such that in 
\begin{align}\label{eqn:CEV}
 & t P\left[\left(\frac{\xi}{{a}(t)},\frac{\eta}{{b}(t)}\right) \in \;
   \cdot\;\right] \to \mu(\cdot),\qquad \text{ in }\M (\E_{\sqsupset}).
\end{align}
Additionally assume that 
\begin{itemize}
 \item[(a)] $\mu((r,\infty] \times[0,s])$ is a non-degenerate measure in $s\in[0,\infty)$ for any fixed $r>0$, and,
 \item[(b)] $H(s):=\mu((1,\infty\times [0,s]))$ is a probability distribution. 
\end{itemize}
Then we say
 $(\xi,\eta)$ satisfies a conditional extreme value model and write $(\xi,\eta) \in \CEV({a,b},\mu)$.
}}
 \end{dfn}
 
 \begin{rem}
The definition has some  consequences \cite[Section 2]{heffernan:resnick:2007}:
\begin{enumerate}
\item Convergence in \eqref{eqn:CEV} implies that $\xi$ is regularly varying with some tail index $\alpha > 0$. Consequently $a(t)\in RV_{1/\alpha}$.
\item The limit $\mu$ is a product measure of the form \[\mu((r,\infty)\times[0,s]) = r^{-\alpha}H(s)=: \nu_{\alpha}(r,\infty) H(s) \] for all $(r,s)\in \E_{\sqsupset}$ if and only if 
$$\lim_{t\to \infty}\frac{b(tc)}{b(t)} =1.$$ 
\item If $a(t)=b(t), t> 0$ then $(\xi,\eta)$ is multivariate regularly varying on $\E_{\sqsupset}$ with limit measure $\mu$. (In such a case $\mu$ cannot be a product measure).
\end{enumerate}
 \end{rem}
 
 \begin{rem}
Statistical plots  that check whether bivariate data can be
  modelled by a CEV model were derived in \cite{das:resnick:2011}  and
  are based on the Hillish, Pickandsish and
Kendall's Tau statistics.
 If  data is generated from
a CEV model,  these statistics
tend to a constant as the sample size increases. We  concentrate
on the Hillish  and Pickandsish statistics for this
paper. We will further specialize to the case where $\mu$ is a product
measure $\mu = \nu_{\alpha} \times H$ for reasons that
will be clear in the next subsection.
 \end{rem}
 
 Suppose $(\xi_i,\eta_i); 1\le i \le n$ are iid samples in $\R^2_{+}$ and 
$(\xi_1, \eta_1) \in \CEV(a,b,\mu)$ for some
 $a(t)\to\infty, b(t)>0$ and $\mu\in \M(\E_{\sqsupset})$. We use the
 following notation: 

$$
\begin{array}{llll}
\xi_{(1)} \ge \ldots \ge \xi_{(n)} & \text{The decreasing order
statistics of  $\xi_1,\ldots,\xi_n$.}\\[2mm]
\eta_i^*, ~ 1 \le i \le n & \text{The $\eta$-variable corresponding to
$\xi_{(i)}$, also called}\\[2mm] 
& \text{ the concomitant of $\xi_{(i)}$.}\\ [2mm]
N_{i}^k= \sum\limits_{l=i}^k \bone_{\{\eta_l^* \le \eta_i^*\}}& \text{Rank of $\eta^*_i$ among
$\eta_1^*,\ldots,\eta_k^{*}$. We write $N_i=N_i^k$.}\\ [2mm]
\eta_{1:k}^* \le \eta_{2:k}^* \le \ldots \leq \eta_{k:k}^* & \text{The
increasing order statistics of $\eta_1^*,\ldots,\eta_k^*$.}\\[3mm]
\end{array}
$$

\noindent{\bf{Hillish statistic.}}
For $1\le k\le n$, the {\it Hillish statistic} is
 \begin{align}\label{def:bihill}
 \Hillish_{k,n}=\Hillish_{k,n}(\bbxi,\bbeta) := \frac{1}{k} \sum\limits_{j=1}^{k} \log \frac{k}{j} \log \frac{k}{N_{j}^k}
  \end{align}

 \begin{prop}[Proposition 2.2 and Proposition 2.3
   \cite{das:resnick:2011b}]\label{prop:convhillish} 
 Suppose $(\xi_i,\eta_i);$ $1\le i \le n$  are iid observations
from the $\CEV(a,b,\mu)$ model as  in Definition \ref{CEV_defn} and suppose $H$ 
is continuous. If $k=k(n) \to \infty, \, n
\to \infty$ and $k/n\to 0$, then
\begin{align} \label{eqn:hilllim}
 \Hillish_{k,n} \cinP \int\limits_{1}^{\infty}
\int\limits_{1}^{\infty}\mu((r^{\frac{1}{\alpha}},\infty)\times[0,H^{\leftarrow}(s^{-1})]) \frac{dr}{r}\frac{ds}{s} = : I_{\mu}.
\end{align} 
Moreover $\mu$ is a product measure if and only if both \begin{align*}
\Hillish_{k,n}(\bbxi,\bbeta) \cinP 1 \quad\text{and} \quad \Hillish_{k,n}(\bbxi,-\bbeta) \cinP 1. \end{align*}
\end{prop}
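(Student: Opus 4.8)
The plan is to deduce the convergence \eqref{eqn:hilllim} from $\M$-convergence of the tail empirical measure of the pairs $(\xi_i,\eta_i)$, and then to obtain the product-measure characterization by evaluating the limit $I_\mu$ explicitly. Since $\Hillish_{k,n}$ depends on the sample only through the ranks $j$ and $N_j^k$, it is invariant under coordinatewise strictly increasing transformations; I would exploit this to reduce to standardized marginals and work with the random measure $\frac1k\sum_{i=1}^n\epsilon_{(\xi_i/a(n/k),\,\eta_i/b(n/k))}$, which under Definition \ref{CEV_defn} with $k\to\infty$, $k/n\to0$ converges in probability in $\M(\E_{\sqsupset})$ to $\mu$. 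Here $j/k$ is the empirical tail probability attached to $\xi_{(j)}$, and $N_j^k/k$ is the empirical distribution function of the top-$k$ concomitants evaluated at $\eta_j^*$; because the normalized concomitants converge in law to $H$, we have $N_j^k/k\approx H(\eta_j^*/b(n/k))$.

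Writing $\log(k/j)=-\log(j/k)$ and $\log(k/N_j^k)=-\log(N_j^k/k)$, the statistic becomes a Riemann-type approximation to the integral of the map $(-\log)\otimes(-\log)$ against the transformed empirical measure, whose limit is exactly $I_\mu$ in \eqref{eqn:hilllim}. I would establish convergence in probability by the second-moment method, showing the mean tends to $I_\mu$ and the variance to $0$ (the latter using that order statistics and their concomitant ranks at well-separated levels decouple asymptotically), in parallel with the classical consistency proof of the Hill estimator. The main obstacle is the logarithmic singularities: $\log(k/j)$ is large for the few largest $\xi$'s and $\log(k/N_j^k)$ is large when a concomitant rank is small, so the integrand is unbounded and the functional is not $\M$-continuous. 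I would control this by truncating to $j\ge\delta k$ and $N_j^k\ge\delta k$, applying the $\M$-convergence/continuous-mapping argument to the truncated statistic, and then showing, via a uniform $L^1$ bound resting on $\int_0^1(\log u)^2\,du<\infty$ and a joint-tail estimate for $(\xi,\eta)$, that the discarded contributions vanish as $\delta\to0$ uniformly in $n$.

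For the characterization, observe that $C(p,q):=\mu\bigl((p^{-1/\alpha},\infty)\times[0,H^{\leftarrow}(q)]\bigr)$ is a copula: its margins are uniform because $\mu((r,\infty)\times[0,\infty))=r^{-\alpha}$ and, by assumption (b) with $H$ continuous, $\mu((1,\infty)\times[0,H^{\leftarrow}(q)])=q$, while $2$-increasingness follows from $\mu\ge0$. Substituting $p=1/r$, $q=1/s$ gives the representation
\[
I_\mu=\int_0^1\!\!\int_0^1 C(p,q)\,\frac{dp}{p}\,\frac{dq}{q}=E\bigl[(-\log U)(-\log V)\bigr],\qquad (U,V)\sim C,
\]
where $-\log U$ and $-\log V$ are standard exponential. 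If $\mu$ is a product then $C$ is the independence copula, so $I_\mu=E(-\log U)\,E(-\log V)=1$; since replacing $\eta$ by $-\eta$ replaces $V$ by $1-V$ and leaves the copula a product, the same computation yields $\Hillish_{k,n}(\bbxi,-\bbeta)\cinP1$. This is the easy direction.

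The converse is the crux of the statement. In the representation above, $\Hillish_{k,n}(\bbxi,\bbeta)\cinP1$ and $\Hillish_{k,n}(\bbxi,-\bbeta)\cinP1$ say precisely that $E[(-\log U)(-\log V)]=1$ and $E[(-\log U)(-\log(1-V))]=1$, i.e. that two prescribed exponential cross-moments of the tail copula coincide with their independent values. I would aim to upgrade these two scalar identities to the full factorization $C=\Pi$ by combining them with the copula constraints and the monotone structure ($-\log V$ and $-\log(1-V)$ are oppositely monotone in $V$), and I expect this implication to be the principal difficulty: two moment conditions only pin down independence once the additional structure available in the CEV limit is brought to bear, so this is the step I would treat with the most care.
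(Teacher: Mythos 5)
Your sketch of the first claim (convergence of $\Hillish_{k,n}$ to $I_\mu$) is sound and matches the standard route: reduce to convergence of the tail empirical measure, use $j/k\approx(\xi_{(j)}/a(n/k))^{-\alpha}$ and $N_j^k/k\approx H(\eta_j^*/b(n/k))$, and tame the logarithmic singularities by truncation plus a uniform integrability bound before passing to the limit $I_\mu=E[(-\log U)(-\log V)]$. Be aware, though, that the paper itself supplies no proof of this proposition: it defers entirely to Propositions 2.2 and 2.3 of \cite{das:resnick:2011b}, noting only that the coordinates are swapped and $\mu$ replaces $\mu^*$. So the only meaningful comparison is with that reference, and your first part is consistent with it modulo the unchecked details of the variance/decoupling step.

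The genuine gap is exactly where you flag it: the converse of the product-measure characterization, which you reduce to ``$E[(-\log U)(-\log V)]=1$ and $E[(-\log U)(-\log(1-V))]=1$ imply $C=\Pi$'' and then leave open. At that level of generality the implication is simply false: these are two linear functionals of $C-\Pi$ vanishing, i.e.\ two linear constraints on an infinite-dimensional convex set of copulas, and copulas other than $\Pi$ satisfying both certainly exist. The step cannot be closed without using the CEV structure you mention but never deploy. The missing ingredient is the scaling relation satisfied by the CEV limit measure (Heffernan--Resnick): with $\lim_t b(tc)/b(t)=c^{\rho}$ and no centering one gets $\mu((x,\infty)\times[0,y])=x^{-\alpha}H(y x^{-\alpha\rho})$, whence $C(p,q)=p\,H\bigl(H^{\leftarrow}(q)p^{\rho}\bigr)$. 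This is pointwise $\le pq$ or $\ge pq$ according to the sign of $\rho$, i.e.\ $(U,V)$ is negatively or positively quadrant dependent, and it is a product exactly when $\rho=0$. Hoeffding's covariance identity then upgrades ``zero covariance'' to ``independence'' for whichever of the pairs $(-\log U,-\log V)$, $(-\log U,-\log(1-V))$ is positively quadrant dependent; needing both statistics covers both signs of $\rho$ (and the case with a centering term requires a parallel argument). Without an argument of this kind the ``only if'' direction of your proposal is unproved.
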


\noindent {\it Proof.} 
The proof follows from Propositions 2.2 and 2.3 in \cite{das:resnick:2011b}. The only difference here is the use of measure $\mu$ instead of $\mu^*$ and the roles of the first and the second components are switched.\\

\noindent{\bf{Pickandsish statistic.}}
This statistic gives another way to check the suitability of the CEV assumption and to detect a product measure
in the limit. The Pickandsish statistic is based on ratios of
differences of ordered concomitants and is patterned on the Pickands
estimate 
for the scale parameter of an extreme value distribution
(\citet{pickands:1975},
\citet[page 83]{dehaan:ferreira:2006},
\citet[page 93]{resnickbook:2007}). 
For notational convenience for $s\le t$ write
$\eta^*_{s:t}:=\eta^*_{\lceil s \rceil:\lceil t \rceil}$.  We define
the 
 Pickandsish statistic  for $0<q<1$ as
 \begin{align}\label{def:pick}
 \Pick_{k,n}(q) := \frac{\eta_{qk:k}^*-\eta_{qk/2:k/2}^*}{\eta_{qk:k}^*-\eta_{qk/2:k}^*}.
 \end{align}

 \begin{prop} [Proposition 2.4 and Corollary 2.5
   \cite{das:resnick:2011b}] \label{prop:convpick} 
 Suppose $(\xi_i,\eta_i);$ $1\le i \le n$  are iid observations
from the $\CEV(a,b,\mu)$ model as  in Definition
\ref{CEV_defn}. Assume that $k=k(n) \to \infty, \, n 
\to \infty$ and $k/n\to 0$. Then
 \begin{align}
 \Pick_{k,n}(q) & \cinP \frac{H\inv(q)(1-2^{\rho})}{H\inv(q)-H\inv(q/2)}, \label{eqn:rplim}
 \end{align}
provided $H\inv(q)-H\inv(q/2) \neq 0$. Here $\rho = (\log (c))^{-1} \log\left(\lim\limits_{t\to\infty}\frac{b(tc)}{b(t)}\right)$.
Moreover, $\mu$ is a product measure if and
  only if $$\Pick_{k,n}(q) \cinP 0$$ for some $0<q<1$ where $H\inv(q)-H\inv(q/2) \neq 0$.
  \end{prop}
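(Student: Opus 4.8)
The plan is to reduce the statement to the asymptotics of the ordered concomitants and then read off the limit by the continuous mapping theorem, exactly paralleling the treatment of the Hillish statistic above. The substantive convergence is Proposition 2.4 and Corollary 2.5 of \cite{das:resnick:2011b}, invoked after switching the roles of the two coordinates and replacing $\mu^*$ by $\mu$; I indicate the mechanism. First I would record that the concomitants of the largest $\xi$'s behave like an iid sample from $H$. Since $\xi$ is regularly varying with $a(t)\in RV_{1/\alpha}$, the order statistic satisfies $\xi_{(k)}/a(n/k)\cinP 1$, so selecting the top $k$ values of $\xi$ is asymptotically thresholding at $a(n/k)$. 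The point process $\sum_{i=1}^n \epsilon_{(\xi_i/a(n/k),\,\eta_i/b(n/k))}$ converges to a Poisson random measure with mean measure $\mu$ on $\E_{\sqsupset}$, and the points with first coordinate exceeding $1$ carry second coordinates distributed as $s\mapsto \mu((1,\infty]\times[0,s])=H(s)$, since $\mu((1,\infty]\times[0,\infty])=\nu_\alpha(1,\infty]=1$. Hence the empirical distribution of $\{\eta_i^*/b(n/k)\}_{i=1}^k$ converges to $H$, and, at continuity points of $H\inv$,
$$\frac{\eta_{qk:k}^*}{b(n/k)}\cinP H\inv(q),\qquad \frac{\eta_{qk/2:k}^*}{b(n/k)}\cinP H\inv(q/2).$$

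It remains to treat the one term computed from the top $k/2$ sample. Repeating the argument with $k$ replaced by $k/2$ gives $\eta_{qk/2:k/2}^*/b(2n/k)\cinP H\inv(q)$, and converting the normalization,
$$\frac{\eta_{qk/2:k/2}^*}{b(n/k)}=\frac{\eta_{qk/2:k/2}^*}{b(2n/k)}\cdot\frac{b(2n/k)}{b(n/k)}\cinP H\inv(q)\,\lim_{t\to\infty}\frac{b(2t)}{b(t)}=2^{\rho}H\inv(q),$$
where the final equality follows from the definition of $\rho$ by taking $c=2$, which gives $\lim_{t\to\infty}b(2t)/b(t)=2^{\rho}$. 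Assembling the three limits jointly and applying the continuous mapping theorem to the ratio \eqref{def:pick}---legitimate because the denominator limit $H\inv(q)-H\inv(q/2)$ is assumed nonzero---gives
$$\Pick_{k,n}(q)\cinP\frac{H\inv(q)-2^{\rho}H\inv(q)}{H\inv(q)-H\inv(q/2)}=\frac{H\inv(q)(1-2^{\rho})}{H\inv(q)-H\inv(q/2)},$$
which is \eqref{eqn:rplim}.

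For the product-measure characterization I would use Remark (2) of the excerpt: $\mu$ is a product measure if and only if $\lim_{t\to\infty}b(tc)/b(t)=1$, equivalently $\rho=0$, equivalently $2^{\rho}=1$. If $\rho=0$ the numerator above vanishes and $\Pick_{k,n}(q)\cinP 0$. Conversely, if $\Pick_{k,n}(q)\cinP 0$ for some $q$ with $H\inv(q)-H\inv(q/2)\neq 0$, then the nonzero denominator forces $H\inv(q)(1-2^{\rho})=0$; monotonicity gives $H\inv(q/2)\le H\inv(q)$, so $H\inv(q)=0$ would collapse the denominator, whence $H\inv(q)>0$ and therefore $2^{\rho}=1$, i.e. $\rho=0$, i.e. $\mu$ is a product measure.

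The main obstacle is the first step: justifying rigorously that the ordered concomitants of the top order statistics converge to the quantiles of $H$, and doing so uniformly enough to combine the top-$k$ and top-$k/2$ samples in one vector limit. The delicate points are the replacement of the random threshold $\xi_{(k)}$ by the deterministic $a(n/k)$, the passage to exactly (rather than Poisson-many) $k$ upper points, and the control of $b(2n/k)/b(n/k)\to 2^{\rho}$. These are precisely what Proposition 2.4 and Corollary 2.5 of \cite{das:resnick:2011b} provide, so the remaining work is citation together with the coordinate switch and $\mu\leftrightarrow\mu^*$ relabeling already employed for the Hillish statistic.
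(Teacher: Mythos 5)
Your proposal is correct and rests on the same foundation as the paper's own proof, which simply cites Proposition 2.4 and Corollary 2.5 of \cite{das:resnick:2011b} (after the coordinate switch and $\mu\leftrightarrow\mu^*$ relabeling) and notes the product-measure characterization is immediate from \eqref{eqn:rplim}. Your additional sketch of the mechanism --- concomitant quantile convergence, the $b(2n/k)/b(n/k)\to 2^{\rho}$ factor, continuous mapping, and the observation that $H\inv(q)>0$ whenever the denominator is nonzero --- is accurate, and you correctly identify that the delicate steps are exactly what the cited results supply.
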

 \noindent {\it Proof.} 
The proof follows from Proposition 2.4 in \cite{das:resnick:2011b}. The second part is immediate from \eqref{eqn:rplim}.

 \subsection{Relating MRV and CEV}
We have methods to detect a $\CEV$ model and indicate when the limit
is a product measure. What is the connection with multivariate regular
variation? This connection is given in
\eqref{eq:regVarE}--\eqref{eq:regVarE0Polar}. Regular variation of a
vector $\bZ$ on $\E$ 
and $\E_0$ with scaling functions $b(t)\in RV_{1/\alpha}$ and
$b_0(t)\in RV_{1/\alpha_0}$ respectively with $0<\alpha\le \alpha_0$
is equivalent to:
 \begin{align}
& tP\bigl[\bigl(\|\bZ \|/ b(t), \bZ/\|\bZ\|\bigr) \in \cdot \,\bigr]\to
 \nu_\alpha \times S(\cdot), \quad \text{ in } \M((0,\infty)\times
 \aleph_{\bzero }) \label{eq:regVarPolarE2nd}
\intertext{and}
& tP\Bigl[ 
 \Bigl(\frac{Z_1\wedge Z_2 }{b_0(t)} , \frac{\bZ}{Z_1\wedge Z_2} \Bigr) \in \cdot \Bigr]
 \to 
 \nu_{\alpha_0} \times S_0 (\cdot) \qquad \text{ in }
 \M\bigl((0,\infty)\times \aleph_{[\axes]}).  \label{eq:regVarPolarE02nd}
 \end{align}
If $\aleph_{\bzero}$ and $\aleph_{[\axes]}$ were
 subsets of $[0,\infty)$ we could conclude that
 \eqref{eq:regVarPolarE2nd} and \eqref{eq:regVarPolarE02nd} describe
CEV  models and modest changes, described in the next two results,
  allow use of the CEV model diagnostics.
 
 \begin{prop}\label{prop:E} Suppose $\bZ$ is a random element of
   $\mathbb{R}_+^2. $  
  Fix a norm for $\bz\in \R_{+}^2: \|(z_1,z_2)\|= z_1+z_2$. 
Then $\bZ
  \in \MRV(\alpha, b(t),\nu, \E)$ (which means \eqref{eq:regVarEPolar}
also holds)  if and only if  $\left(\|Z\|,\frac{Z_1}{\|Z\|}\right) \in
CEV(b,1,\mu )$ with limit measure $\mu= \nu_{\alpha} \times \bar{S} $
where $\bar{S}(A) = S((x,y)\in \aleph_{\bzero }: x\in A)$ for any
$A\in\mathcal{B}[0,\infty)$. 
  \end{prop}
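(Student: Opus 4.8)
The plan is to reduce everything to the polar form of regular variation already recorded in the excerpt. By \eqref{eq:regVarEPolar} --- equivalently by the general equivalence \eqref{eq:limMeasPolar} from \cite{das:mitra:resnick:2013, lindskog:resnick:roy:2013} --- the assertion $\bZ \in \MRV(\alpha, b(t), \nu, \E)$ is the same as
$$t\Prob\bigl[(\|\bZ\|/b(t),\, \bZ/\|\bZ\|)\in\cdot\,\bigr]\to \nu_\alpha\times S \quad\text{in } \M\bigl((0,\infty)\times\aleph_{\bzero}\bigr),$$
where, for the norm $\|\bz\|=z_1+z_2$, the unit sphere $\aleph_{\bzero}=\{\bx\in\R_+^2: x_1+x_2=1\}$ is the unit simplex. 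The one structural fact I would exploit is that on the positive orthant the angular coordinate is a single real number: the projection $T_2:\aleph_{\bzero}\to[0,1]$, $T_2(\theta_1,\theta_2)=\theta_1$ (so $\theta_2=1-\theta_1$), is a homeomorphism onto $[0,1]$, and $T_2(\bZ/\|\bZ\|)=Z_1/\|\bZ\|$. Consequently $\bar S=S\circ T_2^{-1}$ is precisely the measure appearing in the statement, and since $S$ is a probability measure on $\aleph_{\bzero}$, $\bar S$ is a probability measure on $[0,1]\subset[0,\infty)$.

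For the forward direction I would transport the polar convergence to $\M(\E_{\sqsupset})$ by a test-function computation. Given $f\in\mathcal{C}(\E_{\sqsupset})$, put $g(r,\btheta)=f(r,\theta_1)$. Because $T_2$ is continuous and the radial coordinate $r$ is preserved (it equals $\|\cdot\|=d(\cdot,\bzero)$ in the polar picture and the distance to $\{0\}\times[0,\infty)$ in $\E_{\sqsupset}$), $g$ is continuous, bounded, and supported where $r$ is bounded below by a positive constant; hence $g$ is an admissible test function on $(0,\infty)\times\aleph_{\bzero}$. The polar convergence then yields $t\bfE\,g(\|\bZ\|/b(t),\bZ/\|\bZ\|)\to(\nu_\alpha\times S)(g)$, and since $g(\|\bZ\|/b(t),\bZ/\|\bZ\|)=f(\|\bZ\|/b(t),Z_1/\|\bZ\|)$ and a change of variables $u=\theta_1$ gives $(\nu_\alpha\times S)(g)=(\nu_\alpha\times\bar S)(f)=\mu(f)$, this is exactly \eqref{eqn:CEV}. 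The two CEV side conditions are automatic for the product limit $\mu=\nu_\alpha\times\bar S$: condition (b) holds because $H(s)=\mu((1,\infty]\times[0,s])=\bar S([0,s])$ is the distribution function of the probability measure $\bar S$, and (a) is non-degenerate for the same reason; moreover $a(t)=b(t)\in RV_{1/\alpha}\to\infty$, as the definition of CEV requires. The converse runs the same argument in reverse: for $g\in\mathcal{C}\bigl((0,\infty)\times\aleph_{\bzero}\bigr)$ I would set $f(r,u)=g\bigl(r,T_2^{-1}(u\wedge1)\bigr)$, a valid test function on $\E_{\sqsupset}$, observe that the prelimit laws and $\bar S$ all concentrate on $(0,\infty)\times[0,1]$ so the values of $f$ for $u>1$ never enter any integral, and read off \eqref{eq:regVarEPolar}, hence MRV.

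The step I expect to demand the most care is the bookkeeping between the two $\M$-convergence spaces. One must verify that ``bounded away from the deleted cone'' genuinely corresponds under $T_2$: in the polar space it means $\|\bx\|$ bounded below, and in $\E_{\sqsupset}=(0,\infty)\times[0,\infty)$ it means the first coordinate bounded below, and these match because $T_2$ leaves the radius untouched. One must also justify that convergence in $\M(\E_{\sqsupset})$ is equivalent to convergence once the second coordinate is restricted to the compact set $[0,1]$ --- this is what permits extending and restricting test functions freely between $\aleph_{\bzero}$ and $[0,\infty)$ --- and this holds precisely because $Z_1/\|\bZ\|\in[0,1]$ surely and $\bar S$ is carried by $[0,1]$. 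Once these identifications are in place, the remaining content is a routine Fubini/change-of-variables check.
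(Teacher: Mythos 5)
Your argument is correct and is exactly the route the paper takes: its entire proof is the one-line remark that the claim ``is easily deducible from the relationship between $S$ and $\bar S$,'' and your write-up simply makes explicit the homeomorphism $\aleph_{\bzero}\cong[0,1]$ via $\theta\mapsto\theta_1$, the transfer of test functions between $\M\bigl((0,\infty)\times\aleph_{\bzero}\bigr)$ and $\M(\E_{\sqsupset})$, and the verification of the CEV side conditions. The only caveat (shared with the paper's own statement) is that condition (a) of Definition \ref{CEV_defn} is not automatic when $S$ is a point mass, so the ``if and only if'' tacitly excludes that degenerate case.
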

 \noindent{{\it Proof.}} The proof is easily deducible from the relationship between $S$ and $\bar{S}$.

 \begin{prop}\label{prop:E0} 
  Suppose $\bZ \ge 0$ is regularly varying on $\E$ with function $b(t)\in RV_{1/\alpha}$. Then $\bZ$ exhibits HRV on $\E_0$ with scaling function $b_0(t)\in RV_{1/\alpha_0}, \alpha_0\ge \alpha$  if and only if  
$$\left( Z_1\wedge Z_2, \left(\frac{Z_1}{Z_2} \bigvee
    \frac{Z_2}{Z_1}\right)\right) \in \CEV(b_0,1,\mu_0  )$$ 
with limit measure
given by $\mu_0= \nu_{\alpha_0} \times \left(pG_1+(1-p)G_2\right)  $  
  where $G_1(s)=S_0([1,s]\times \{1\})$ and
  $G_2(s)=S_0(\{1\}\times[1,s])$ for $s\ge 1$ and $G_1(s)=G_2(s)=0,
  s\le 1.$ 
 \end{prop}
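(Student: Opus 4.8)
The plan is to route both implications through the generalized polar form of HRV and to recognize the CEV pair as the image of the polar pair under a map that collapses the two rays comprising $\aleph_{[\axes]}$. By the $\M$-convergence results cited in Section \ref{subsec:regVarMod}, the assertion $\bZ\in\MRV(\alpha_0,b_0(t),\nu_0,\E_0)$ is equivalent to the polar statement \eqref{eq:regVarE0Polar}, whose radial coordinate $d(\bZ,[\axes])=Z_1\wedge Z_2$ is exactly the first coordinate of the proposed CEV pair and whose angular coordinate $\bZ/(Z_1\wedge Z_2)$ ranges over $\aleph_{[\axes]}=(\{1\}\times[1,\infty))\cup([1,\infty)\times\{1\})$. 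Introduce the collapse map $\Psi\colon\aleph_{[\axes]}\to[1,\infty)$ with $\Psi(1,w)=\Psi(w,1)=w$; then $\Psi\bigl(\bZ/(Z_1\wedge Z_2)\bigr)=(Z_1/Z_2)\vee(Z_2/Z_1)$ is precisely the second CEV coordinate, so the CEV pair equals $\mathrm{id}\times\Psi$ applied to the polar pair.

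For the forward implication I would push \eqref{eq:regVarE0Polar} forward under $\mathrm{id}\times\Psi$. This map is continuous and proper in the sense needed for $\M$-convergence (preimages of sets bounded away from the deleted cone $\{Z_1\wedge Z_2=0\}$ are again bounded away from it), so the limit is transported to the product $\nu_{\alpha_0}\times(S_0\circ\Psi^{-1})$. Writing $H:=S_0\circ\Psi^{-1}$ and splitting $\aleph_{[\axes]}$ into its two rays with masses $p=S_0(\{1\}\times[1,\infty))$ and $1-p=S_0([1,\infty)\times\{1\})$ and ray-conditional laws $G_1,G_2$ yields $H=pG_1+(1-p)G_2$, matching the stated $\mu_0=\nu_{\alpha_0}\times H$. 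It then remains to verify conditions (a) and (b) of Definition \ref{CEV_defn}: here $\mu_0((r,\infty]\times[0,s])=r^{-\alpha_0}H(s)$ is non-degenerate in $s$ because $H$ is a genuine, non-trivial probability law on $[1,\infty)$, and $H(s)=\mu_0((1,\infty)\times[0,s])$ is a distribution function; the scaling functions are $a(t)=b_0(t)\to\infty$ and $b(t)\equiv1$, so $b(tc)/b(t)\equiv1$, consistent with the product structure by the remark following Definition \ref{CEV_defn}.

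The converse reverses the pushforward, and this is the step I expect to be the crux. Restricted to either ray, $\Psi$ is a homeomorphism onto $[1,\infty)$, so the CEV convergence can be lifted ray-by-ray back to \eqref{eq:regVarE0Polar}; equivalently it reproduces the one-ray statements \eqref{eq:G1}--\eqref{eq:G2}, whose combination is the mixture \eqref{eq:Gmix} one reads directly off the CEV limit. The obstacle is that $\Psi$ is two-to-one: the scalar $(Z_1/Z_2)\vee(Z_2/Z_1)$ forgets which component attains $Z_1\wedge Z_2$, so the CEV limit $\mu_0$ pins down only the symmetrized angular mass $H=pG_1+(1-p)G_2$ and not the separate ray contributions. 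To recover the full hidden measure $\nu_0$ one must reinstate the ordering information, for instance by applying the convergence separately on $\{Z_1\le Z_2\}$ and $\{Z_2<Z_1\}$ --- that is, by validating \eqref{eq:G1} and \eqref{eq:G2} individually rather than only their mixture \eqref{eq:Gmix}. Once both rays are controlled, \eqref{eq:regVarE0Polar} follows and with it HRV of $\bZ$ on $\E_0$.
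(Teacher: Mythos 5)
Your forward direction is essentially the paper's argument made explicit: the paper's entire proof is the single sentence ``the proof follows from the connection between $S_0$ and $G_1,G_2$,'' and the pushforward of \eqref{eq:regVarE0Polar} under the collapse map $\Psi$, together with the decomposition of $S_0$ into its masses and conditional laws on the two rays of $\aleph_{[\axes]}$, is exactly that connection spelled out. Your verification of conditions (a) and (b) of Definition \ref{CEV_defn} and the observation that $b\equiv 1$ forces the product form are fine.

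On the converse you have put your finger on a genuine issue that the paper passes over in silence, and you should not soften it: the collapse map is two-to-one, and the CEV hypothesis on $\bigl(Z_1\wedge Z_2,\ (Z_1/Z_2)\vee(Z_2/Z_1)\bigr)$ controls only the joint law of the radius and the symmetrized angle, not the indicator of which ray carries the point. Convergence of $t\Prob[\bZ/b_0(t)\in\cdot\,]$ on a rectangle $(a,\infty)\times(b,\infty)$ with $a\neq b$ requires the triple (radius, angle, ray) to converge jointly; one can build $\bZ=R_0\bTheta_0$ with $R_0$ Pareto$(\alpha_0)$ and a ray-choice probability that oscillates with $R_0$, so that the collapsed pair satisfies the stated CEV (with $G_1=G_2$) while $\bZ$ has no HRV. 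So the ``if'' direction is false under the literal hypothesis, and your proposed repair --- requiring \eqref{eq:G1} and \eqref{eq:G2} separately rather than only their mixture \eqref{eq:Gmix} --- is the correct one; it is also what the paper actually does in its data analysis, where the CEV diagnostics are run separately on $\{D^*>R^*\}$ and $\{D^*<R^*\}$. The only criticism of your write-up is that you present this as ``the crux'' to be handled rather than stating plainly that it amounts to strengthening the hypothesis of Proposition \ref{prop:E0}; make that explicit. Your account is otherwise more careful than the paper's.
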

\begin{proof}
  The proof follows from the connection
   between $S_0$ and $G_1,G_2$.  \end{proof}
  
\section{Testing for MRV and HRV: data examples}\label{sec:detect:data}
Here we analyze data sets to see whether a multivariate regularly varying model is a valid assumption. We also look for asymptotic independence and if it exists we test for the existence of hidden regular variation.

\begin{exm}\label{ex:1}
\noindent{\bf{Boston University: HTTP downloads.}}
\begin{figure}[h]
\begin{centering}
\includegraphics[width=9cm]{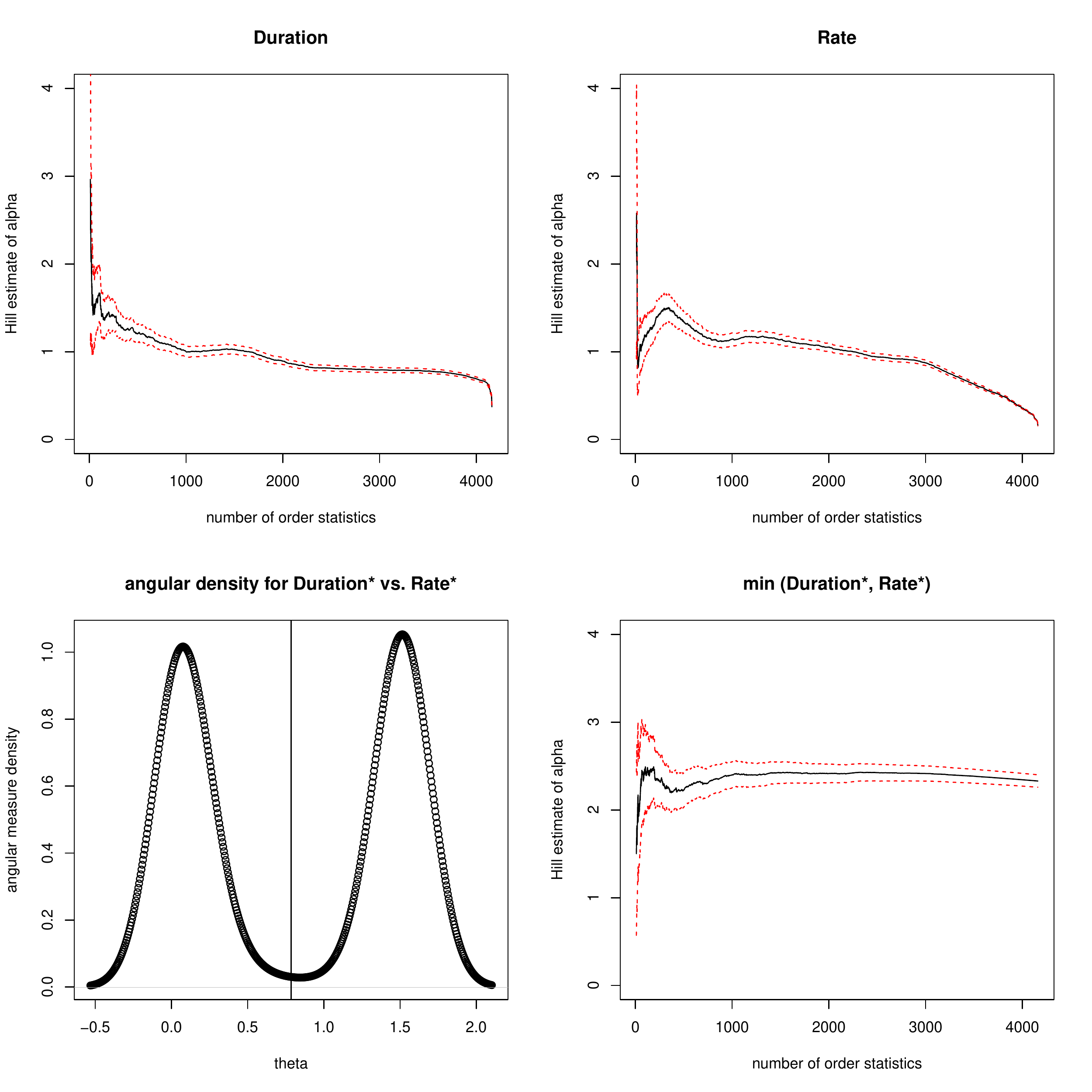}
\end{centering}
\caption{BU dataset. {\it Top panel:} Hill plots of tail parameters for $D$ and $R$. {\it Bottom left plot:} angular density of $(D^*,R^*)$. {\it Bottom right plot:} Hill plot for $\min(D^*,R^*)$.}\label{fig:buLbuRalpha}
\end{figure}

The first data set is obtained from a now classical Boston University
study \citep{crovella:bestavros:1996} 
which suggested self-similarity and heavy tails in web-traffic
data. Our dataset was created from  HTTP downloads in sessions
initiated by logins at a Boston University computer laboratory. It
consists of 8 hours 20 minutes worth of downloads in February 1995
after applying an aggregation rule to downloads to associate machine
triggered actions with human requests and is discussed in \cite[page
176]{guerin:nyberg:perrin:resnick:rootzen:starica:2003}. The result of
the aggregation 
is {4161} downloads which are characterized by the following variables:
\begin{itemize}
\item $S =$ the size of the download in kilobytes, 
\item $D =$ the duration of the download in seconds,
\item $R =$ throughput of the download; that is,  $= S/D$.
\end{itemize}

Previous studies  \citep[page 299,
316]{resnickbook:2007} indicate heavy tailed behavior of all three
variables and asymptotic independence between 
$D$ and $R$. We concentrate on the variables $(D,R)$
so our data is $\{(D_i,R_i); 1\le i \le
4161\}$. Moreover the rank-transformed variables are denoted:  
    \begin{align*}
D_i^*&= \sum_{j=1}^{4161} \bone_{\{D_i\ge D_j\}},
&& R_i^*= \sum_{j=1}^{4161} \bone_{\{R_i\ge R_j\}}.
\end{align*}
for $1\le i\le 4161$ with the generic rank-transformed variables denoted $D^*$ and $R^*$ respectively.

\begin{figure}[h]
\begin{centering}
\includegraphics[width=13cm]{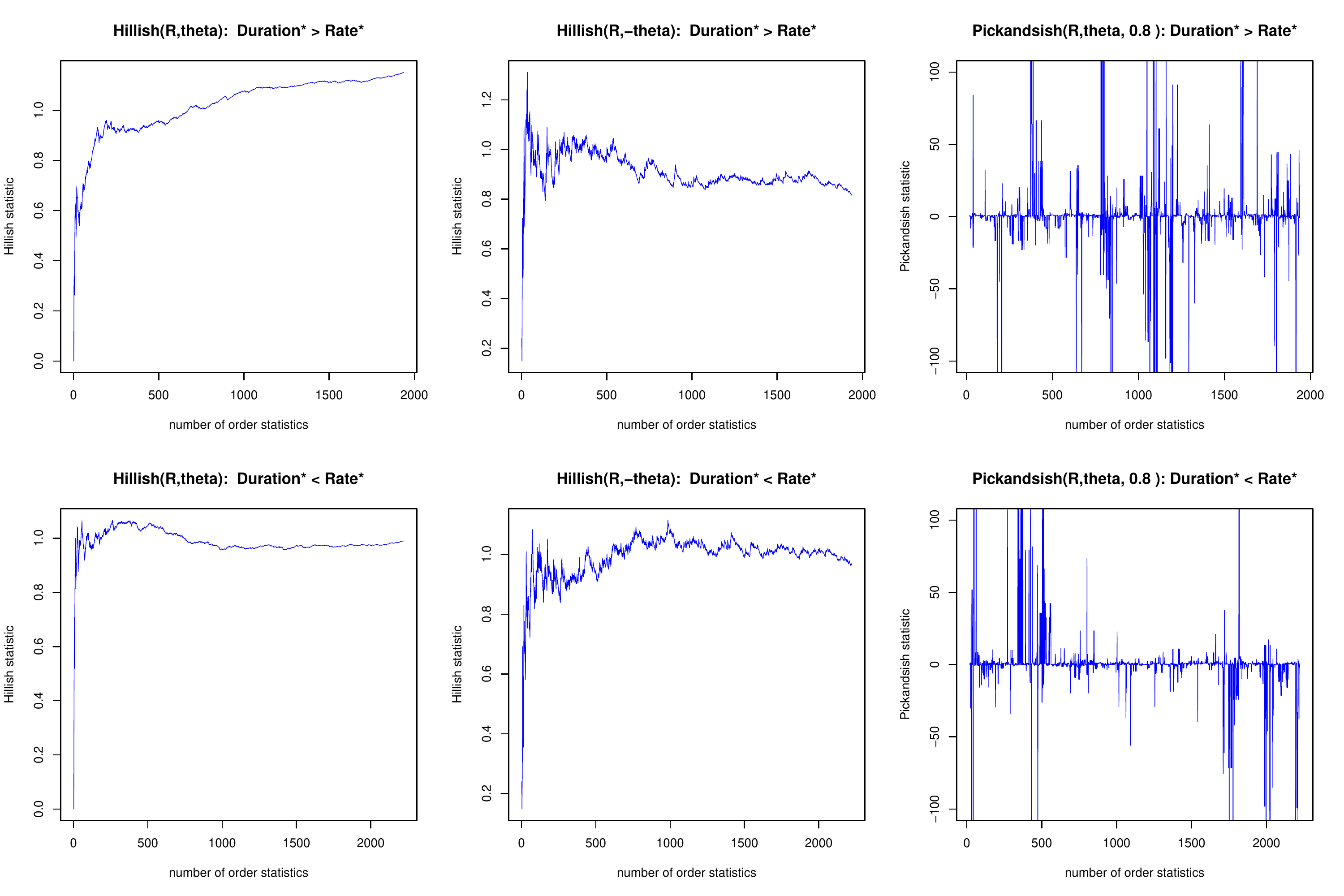}
\end{centering}
\caption{BU dataset. {\it Top panel ($D^*>R^*$):} Hillish plots for
  $(A,\theta_1)$ and $(A,-\theta_1)$ and Pickandsish plot for
  $(A,\theta_1)$ at $q=0.8$. {\it Bottom panel ($D^*<R^*$):} Hillish
  plots for $(A,\theta_2)$ and $(A,-\theta_2)$ and Pickandsish plot
  for $(A,\theta_2)$ at $q=0.8$.}\label{fig:buLbuRhillpick} 
\end{figure}

In Figure \ref{fig:buLbuRalpha} we plot Hill estimates of the tail
parameters of $D$ and $R$ for increasing number of order statistics of their
respective univariate data values. Both plots are consistent with
$D$ and $R$ being heavy tailed 
 with  tail parameters $\alpha_D$ and $\alpha_R$
greater than 1. (This is confirmed
  \citep{resnickbook:2007,drees:dehaan:resnick:2000,resnick:starica:1997a}
 by altHill and QQ plots--not
shown--showing $\hat \alpha_D=1.4$ and $\hat \alpha_R=1.2$.)
 The angular density plot of $(D^*,R^*)$ 
shows data concentration near $0$ and $\pi/2$ consistent with
asymptotic independence of the quantities. Asymptotic independence
does not automatically imply HRV so we check for HRV on $\E_0$. 

\begin{figure}[h]
\begin{centering}
\includegraphics[width=6cm]{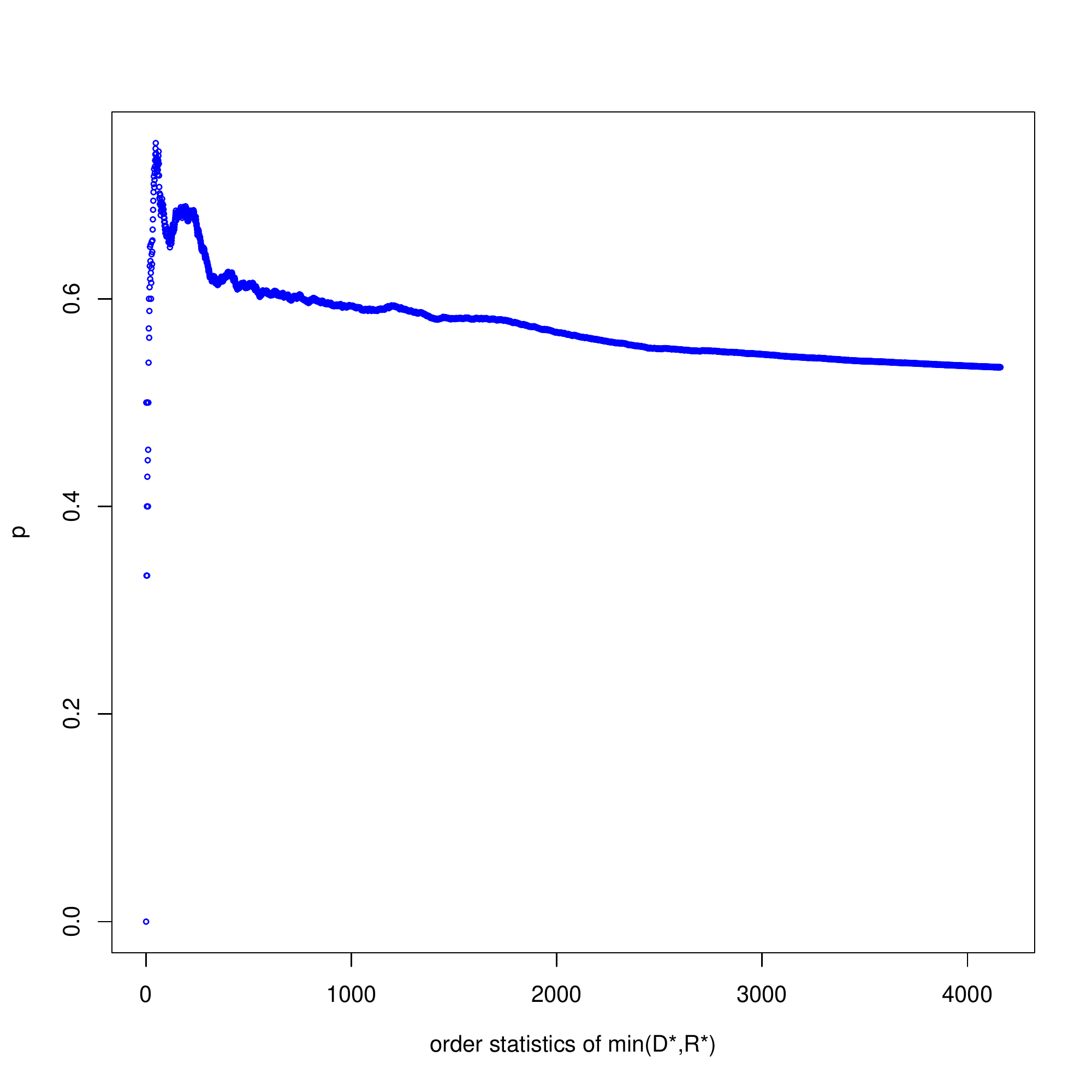}
\end{centering}
\caption{BU dataset. Proportion of data with $D_i^*>R_i^*$ for order
  statistics of $A_i=\min\{D_i^*,R_i^*\}$.}\label{fig:buLbuR_p} 
\end{figure}


\begin{figure}[h]
\begin{centering}
\includegraphics[width=12cm]{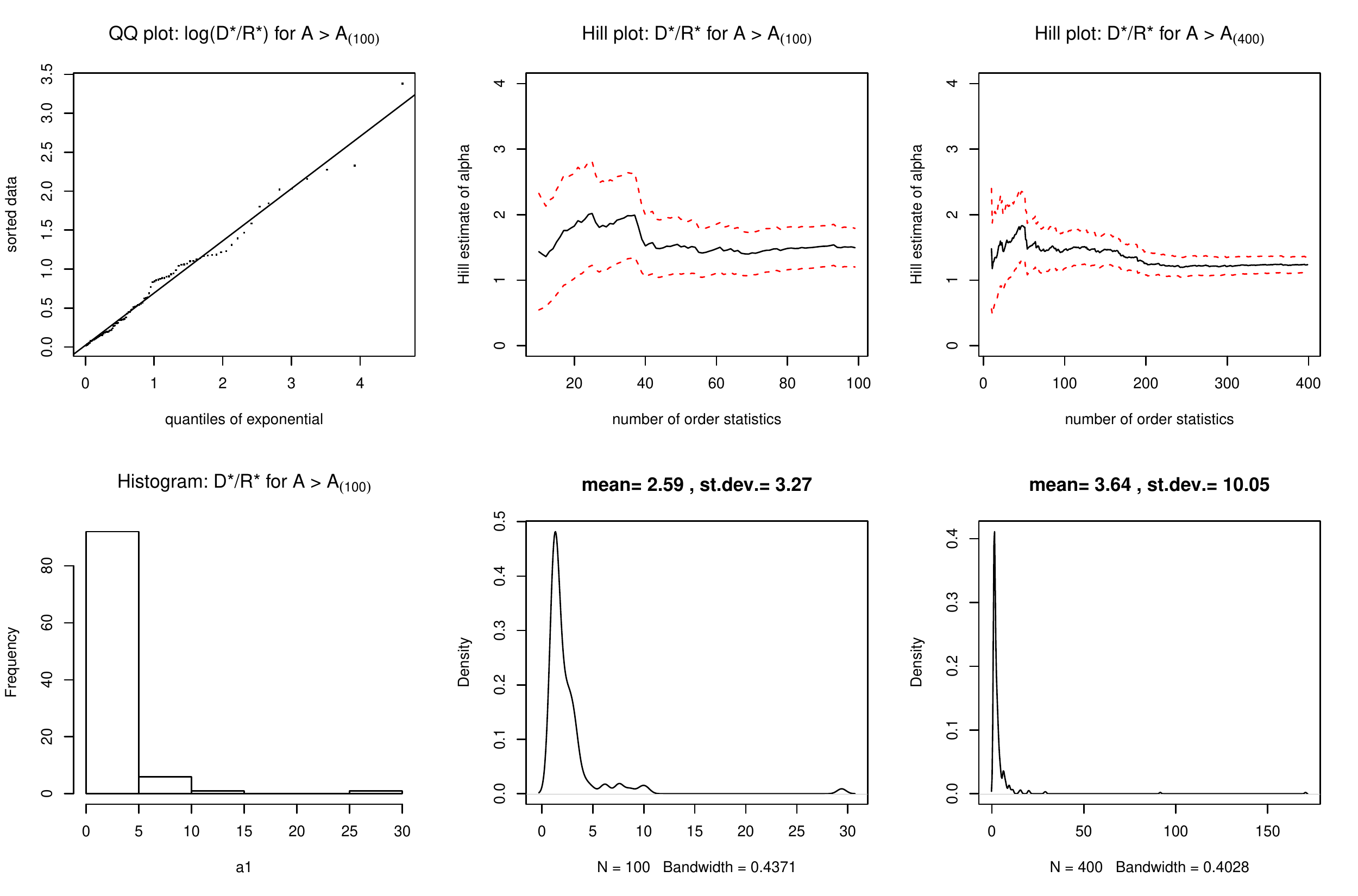}
\end{centering}
\caption{BU dataset. {\it Top panel:} QQ plot   
  of $\log(D^*/R^*)$ when $A_i>A_{(100)}$ and Hill plots of $D^*/R^*$
  when $A_i>A_{(100)}$ and $A_i>A_{(400)}$. 
{\it Bottom panel:} Histogram of $D^*/R^*$ when $A_i>A_{(100)}$ and
kernel density estimates of $D^*/R^*$ when $A_i>A_{(100)}$ and
$A_i>A_{(400)}$. }\label{fig:buLbuRG1} 
\end{figure}

\begin{figure}[h]
\begin{centering}
\includegraphics[width=12cm]{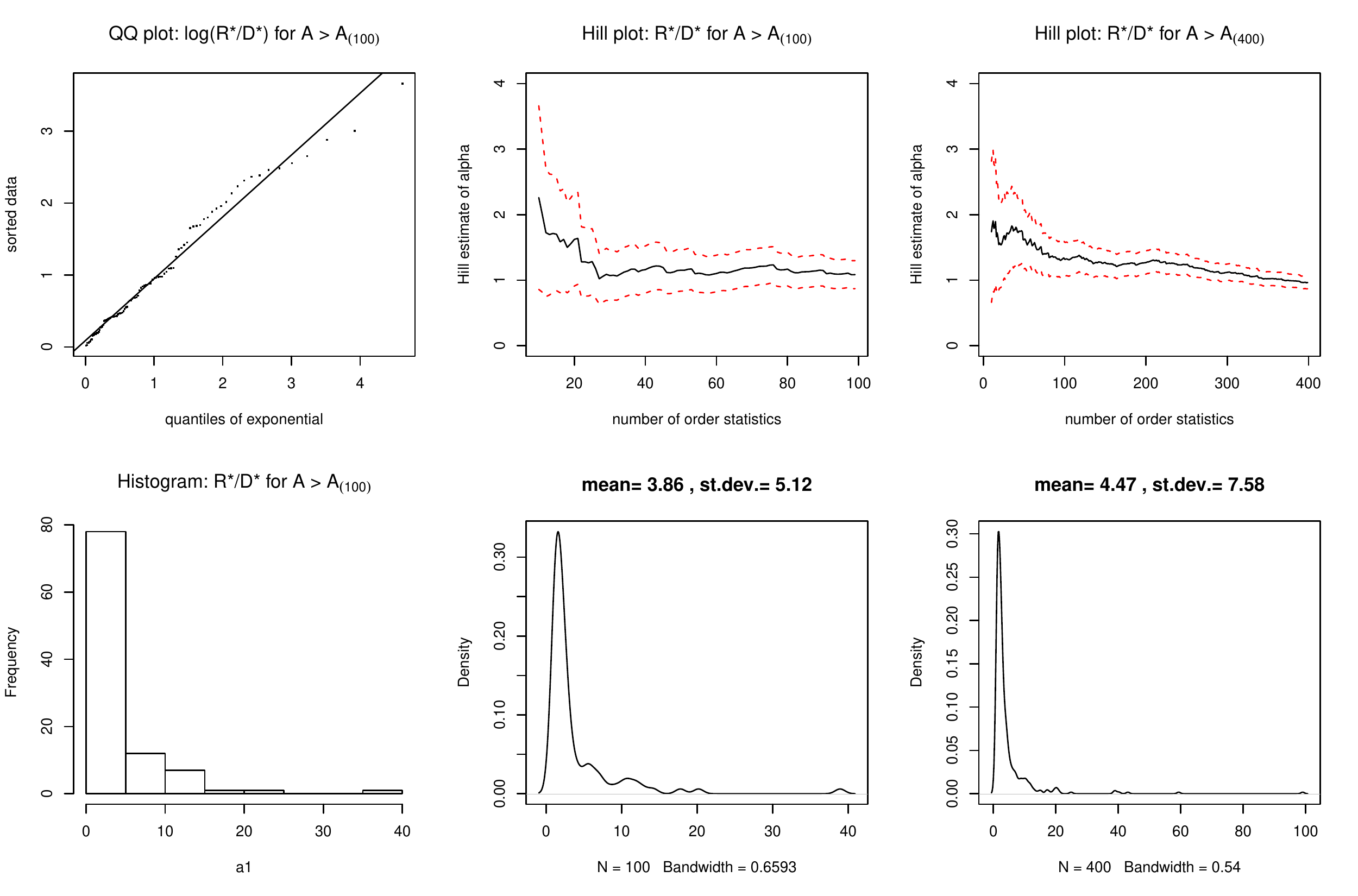}
\end{centering}
\caption{BU dataset. {\it Top panel:} QQ plot
of $\log(R^*/D^*)$ when $A_i>A_{(100)}$ along with Hill plots of $R^*/D^*$
when $A_i>A_{(100)}$ and $A_i>A_{(400)}$. 
{\it Bottom panel:} Histogram of $R^*/D^*$ when $A_i>A_{(100)}$ and kernel density estimates of  $R^*/D^*$ when $A_i>A_{(100)}$ and $A_i>A_{(400)}.$ }\label{fig:buLbuRG2}
\end{figure}
The rank transformation causes $(D^*,R^*)$ to be  standard
regularly varying with $\alpha=1$ and Proposition \ref{prop:E0}
implies 
$(D^*,R^*)$ has hidden regular variation on $\E_0$ if (and only if) $$
(A,\theta):=\left(\min\{D^*,R^*\}, \max \left\{\frac{D^*}{R^*},
    \frac{R^*}{D^*}\right\}\right) \in \CEV (b_0,1,\mu_0 )$$
for some function $b_0$. We proceed by testing the following:

\begin{enumerate}
\item Is the variable $A= \min\{D^*,R^*\}$  regularly varying with parameter greater
  than 1? The bottom right plot in Figure \ref{fig:buLbuRalpha} plots
  Hill estimates for increasing number of order statistics of $A$ and 
 stabilizes between 2 and 3 indicating the desired  heavy tail behavior. 
\item For $D^*>R^*$, we check whether $(A,\theta_1):= (\min\{D^*,R^*\}, 
  \frac{D^*}{R^*})$ follows a CEV model. In the top panel of Figure
  \ref{fig:buLbuRhillpick}, the Hillish plots of
  $(A,\theta_1)$ and $(A,-\theta_1)$ are close to 1 near the left side
  of their plots. Moreover we observe that the Pickandsish estimate at
  $q=0.8$ also remains near 0. From Propositions
  \ref{prop:convhillish} and \ref{prop:convpick}, this is consistent
with  $(A,\theta_1)\in \CEV (b_0,1 , \mu_0 )$ with a limit 
  measure of the CEV being a product measure. 

\item For $D^*<R^*$, we similarly check whether $(A,\theta_2):=
  (\min\{R^*,D^*\}, \frac{R^*}{D^*})$ follows a CEV model. In the bottom
  panel of Figure \ref{fig:buLbuRhillpick} we observe that the Hillish
  plots of $(A,\theta_2)$ and $(A,-\theta_2)$ are close to 1 near the
  left side of their plots. We also observe that the Pickandsish
  estimate at $q=0.8$  remains near 0. Hence we again conclude that
  the evidence is consistent with
  $(A,\theta_2)\in \CEV (b_0,1,\mu_0)$ with a limit measure of the CEV being
  a product measure. 
\end{enumerate}
\par

Thus modeling the joint distribution of $(D,R)$ using MRV and HRV
is consistent with the data.
 The next 
step is to  estimate the
 distributions of $\theta_1\sim G_1$ and $\theta_2\sim G_2$ as well as
 $q$ defined in Proposition~\ref{prop:E0}.  Figure
 \ref{fig:buLbuR_p}  plots  
$ \hat{q}_k = \frac{1}{k} \sum_{i=1}^{4161} \bone_{\{D_i^*>R_i^*, A_i
  > A_{(k)}\}}, \quad k=2,\ldots, 4161,$
where $A_i= \min\{R^*_i,D^*_i\}$ and $A_{(1)} \ge A_{(2)} \ge \ldots$ form
order statistics from $A_i; 1\le i \le 4161$. Observing Figure
\ref{fig:buLbuR_p} for $k$ near 0,  an estimate of $q$ is
$\hat{q}=0.6$.

To find the distribution of $\theta_1$ we
make a standard exponential QQ plot of $\log(D^*_i/R^*_i)$ where
$A_i=\min (D^*_i,R^*_i)> A_{(100)}$, which serves as an
  exploratory diagnostic for heavy tails. 
 We also create Hill plots for $D^*_i/R^*_i$ where $A_i >
   A_{(k)}$ for two choices of $k$.
The top panels of Figure \ref{fig:buLbuRG1} 
give the QQ plot for $k=100$ (left) and the Hill
plots for $k=100$ and $400$ (middle and right). The bottom
panels in Figure \ref{fig:buLbuRG1}  have a histogram of $D_i^*/R_i^*$
for $A_{i}>A_{(100)}$ (left) and kernel density plots of $D_i^*/R_i^*$
for $A_{i}>A_{(100)}$ (middle) and $A_{i}>A_{(100)}$ (right).  The
plots 
indicate $G_1$ is heavy tailed with an index between 1.5 and 
2 and we can provide a density estimate for the distribution of
$\theta_1$. 

 We create the same set of plots for finding $G_2$ in  Figure
\ref{fig:buLbuRG2} which also indicates towards a similar conclusion
of heavy tailed behavior for $G_2$ with an index close to but less
than $2$. 
\end{exm}

\begin{exm}\label{ex:2}
\noindent{\bf{UNC Chapel Hill HTTP response data.}}
\begin{figure}[h]
\begin{centering}
\includegraphics[width=12cm]{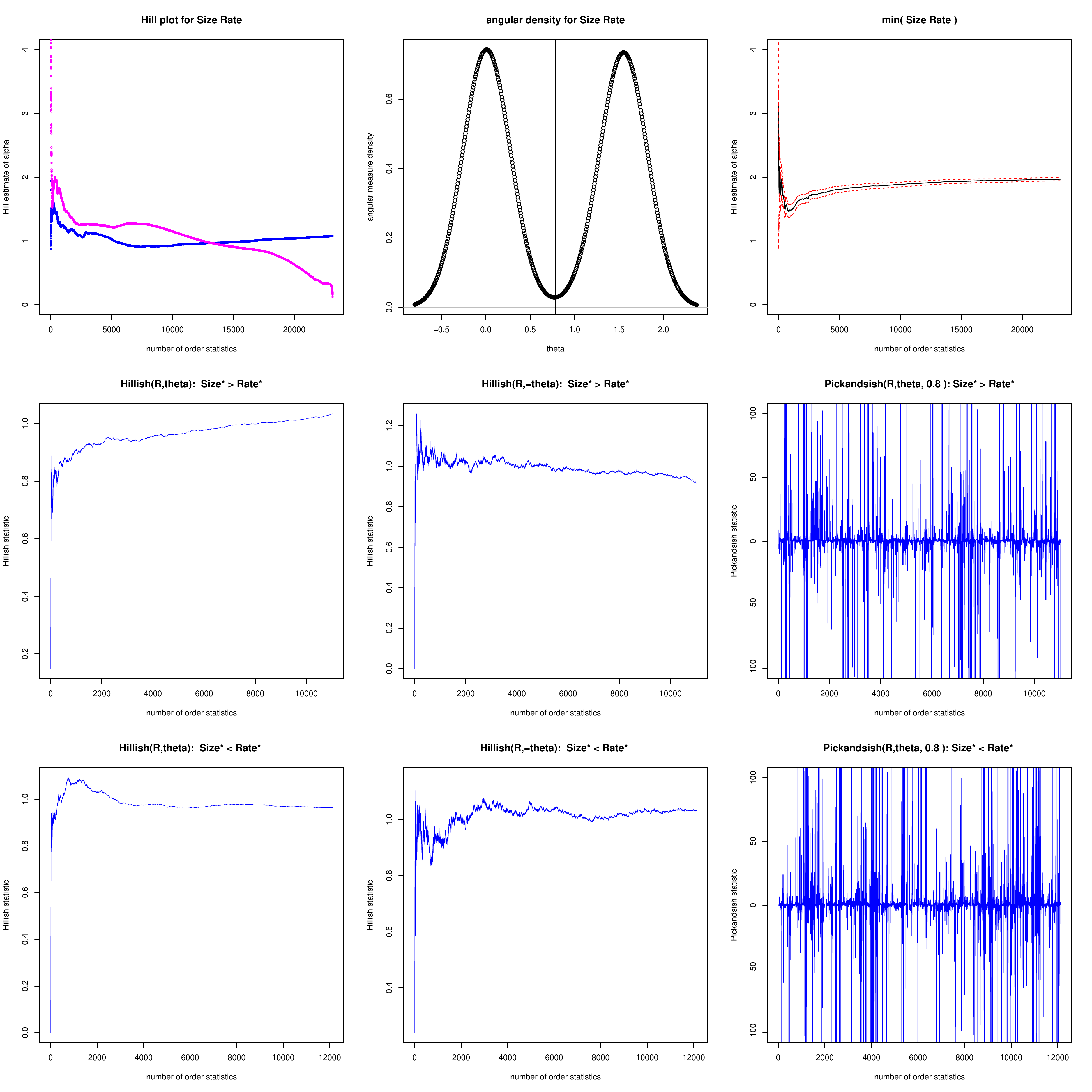}
\end{centering}
\caption{UNC HTTP responses dataset. {\it Top panel:} (Left:) Hill plots of tail parameters for $S$(blue), $R$(magenta); (Middle:) angular density of $(S^*,R^*)$; (Right:) Hill plot for $\min(S^*,R^*))$. {\it Middle panel ($S^*>R^*$):} Hillish plots for
  $(A,\theta_1)$ and $(A,-\theta_1)$ and Pickandsish plot for
  $(A,\theta_1)$ at $q=0.8$. {\it Bottom panel ($S^*<R^*$):} Hillish
  plots for $(A,\theta_2)$ and $(A,-\theta_2)$ and Pickandsish plot
  for $(A,\theta_2)$ at $q=0.8$.} \label{fig:UNCall}
\end{figure}
 A {\it response\/}
is the data transfer resulting from an HTTP request.
The  data set 
\citep{hernandezcamposetal:2005} consists of 
21,828 thresholded responses bigger  than  100 kilobytes 
measured between 1:00pm and 5:00pm on 25th April, 2001.
We use similar notation as in Example \ref{ex:1}.

\begin{itemize}
\item $S =$ HTTP response size; total size of packets transferred in kilobytes,
\item $D =$ the elapsed duration between first and last packets in
  seconds of the response,
\item $R =$ throughput of the response  $= S/D$.
\begin{figure}[h]
\begin{centering}
\includegraphics[width=4.5cm]{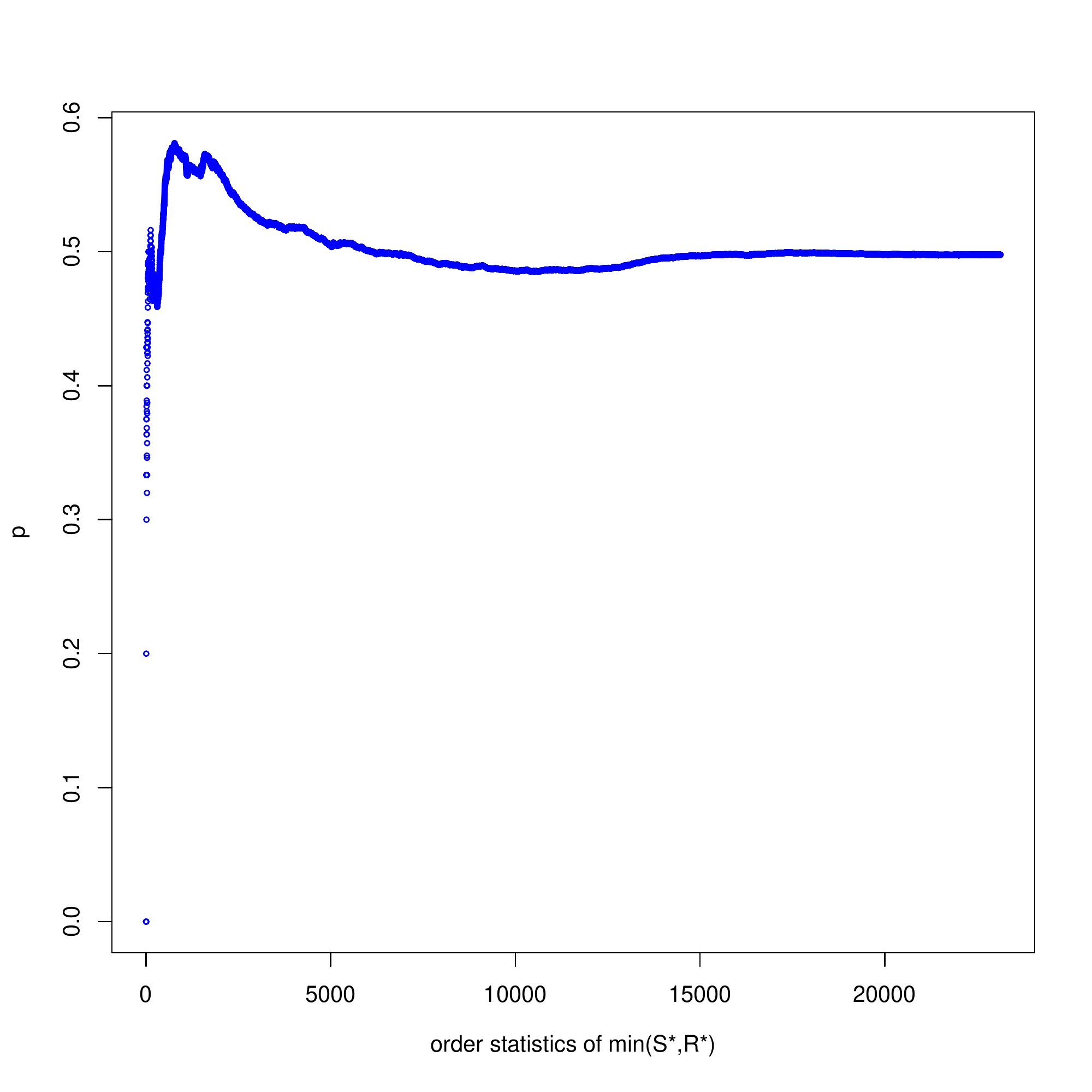}
\end{centering}
\caption{UNC HTTP responses. Proportion of data with $S_i^*>R_i^*$ for order
  statistics of $A_i=\min\{S_i^*,R_i^*\}$.}\label{fig:25Apr_p} 
\end{figure}
\end{itemize}

Thus, the data set consists of 
$\{(S_i,D_i,R_i); 1\le i \le 21828 \}$. Our interest is in
the variables $(S,R)$ which  exhibit heavy  tails and asymptotic independence 
\cite{hernandezcamposetal:2005}.  Denote the rank-transformed
variables:  
$$\Bigl(
S_i^*= \sum_{j=1}^{21828} \bone_{\{S_i\ge S_j\}},
R_i^*= \sum_{j=1}^{21828} \bone_{\{R_i\ge R_j\}}\Bigr), \quad 1\le i\le 21828,
$$
 with the generic rank-transformed variables denoted $S^*$ and $R^*$
 respectively. The top left plots in Figure \ref{fig:UNCall}  give
 Hill plots of the tail  indices of the distributions of $S$ and  $R$
 and suggest these indices are between 1 and 2.
Asymptotic independence of $S,R$ is exhibited in the angular density
plot (top middle plot) for $(S^*,R^*)$.


We next inquire if  HRV exists on $\E_0$.
The Hill plot for $\min(S^*,R^*)$ on the upper right panel of Figure
\ref{fig:UNCall}  gives a tail estimate $\hat \alpha_0$ clearly
greater than 1 and is consistent with HRV.
We transform the data $\{(S^*,R^*);\,1\leq i\leq 21828\}$
with the transformation $\gpolar$) to obtain:
 $$
(A,\theta):= \gpolar(S^*,R^*)=\left(\min\{S^*,R^*\}, \max \left\{\frac{S^*}{R^*},
    \frac{R^*}{S^*}\right\}\right).$$ 
From Proposition \ref{prop:E0} we know $(A,\theta)\in \CEV(b_0,1,\mu_0)$ for some function $b_0$ and measure $\mu_0$ on $\E_0$. 
 For both the cases $S^*>R^*$ (see middle panels in Figure
 \ref{fig:UNCall}) and $S^*<R^*$ (see bottom panels in Figure
 \ref{fig:UNCall}), we employ the Hillish and Pickandsish diagnostics
 to check consistency of  $(A,\theta_1):= (\min\{S^*,R^*\}, 
{S^*}/{R^*})$  and $(A,\theta_2):= (\min\{S^*,R^*\}, 
{R^*}/{S^*})$ with the CEV model with product limit measure.
    The Hillish plots are reasurringly hovering at height 1 and the
    Pickandsish plots center at 0.
\begin{figure}[h]
\begin{centering}
\includegraphics[width=9cm]{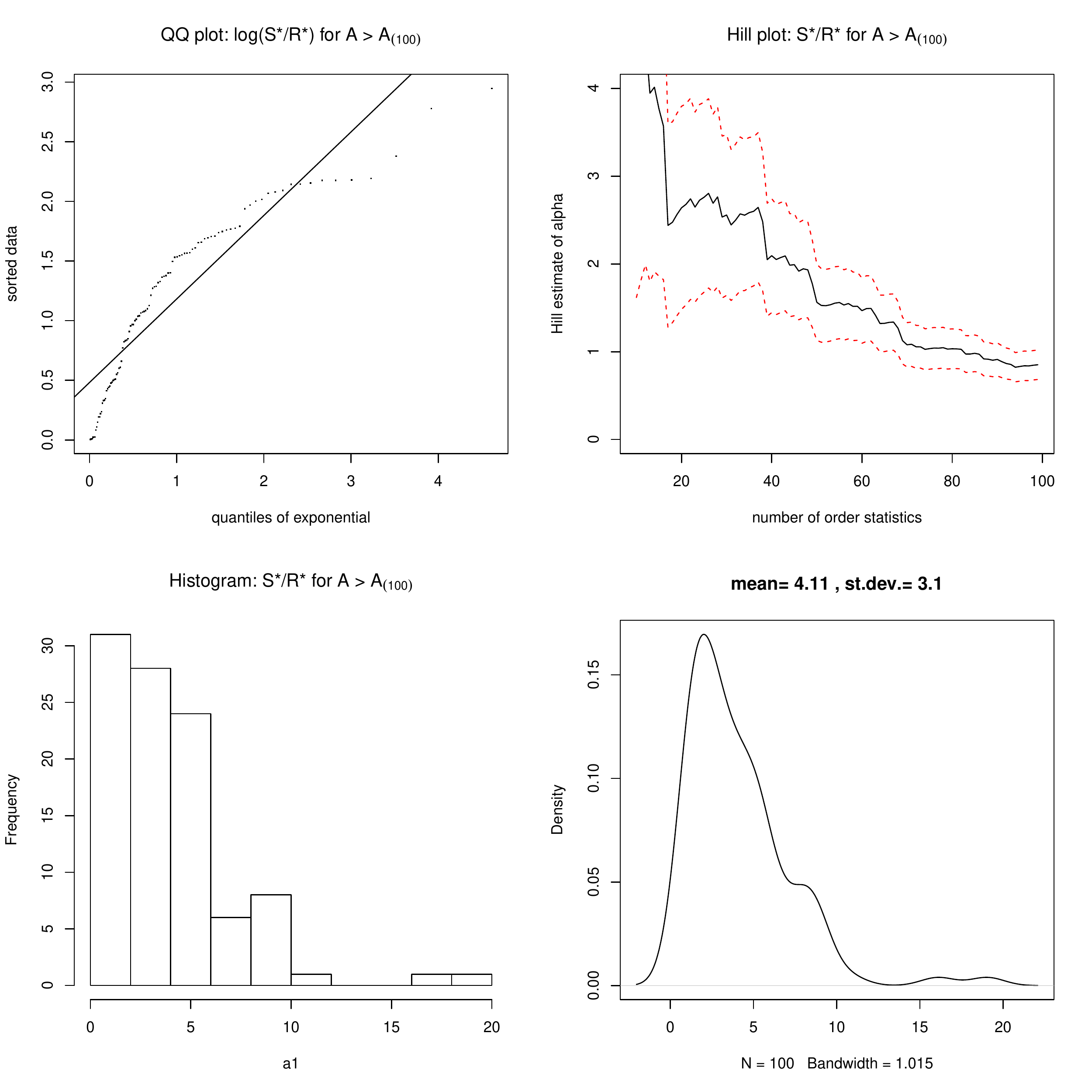}
\end{centering}
\caption{UNC HTTP responses: {\it Top:} QQ plot   
  of $\log(S^*/R^*)$ when $A_i>A_{(100)}$ along with Hill plots of $S^*/R^*$
  when $A_i>A_{(100)}$ and $A_i>A_{(400)}$. 
{\it Bottom:} Histogram and kernel density estimates of $S^*/R^*$ when $A_i>A_{(100)}$}\label{fig:25Apr_G1} 
\end{figure}

\begin{figure}[h]
\begin{centering}
\includegraphics[width=12cm]{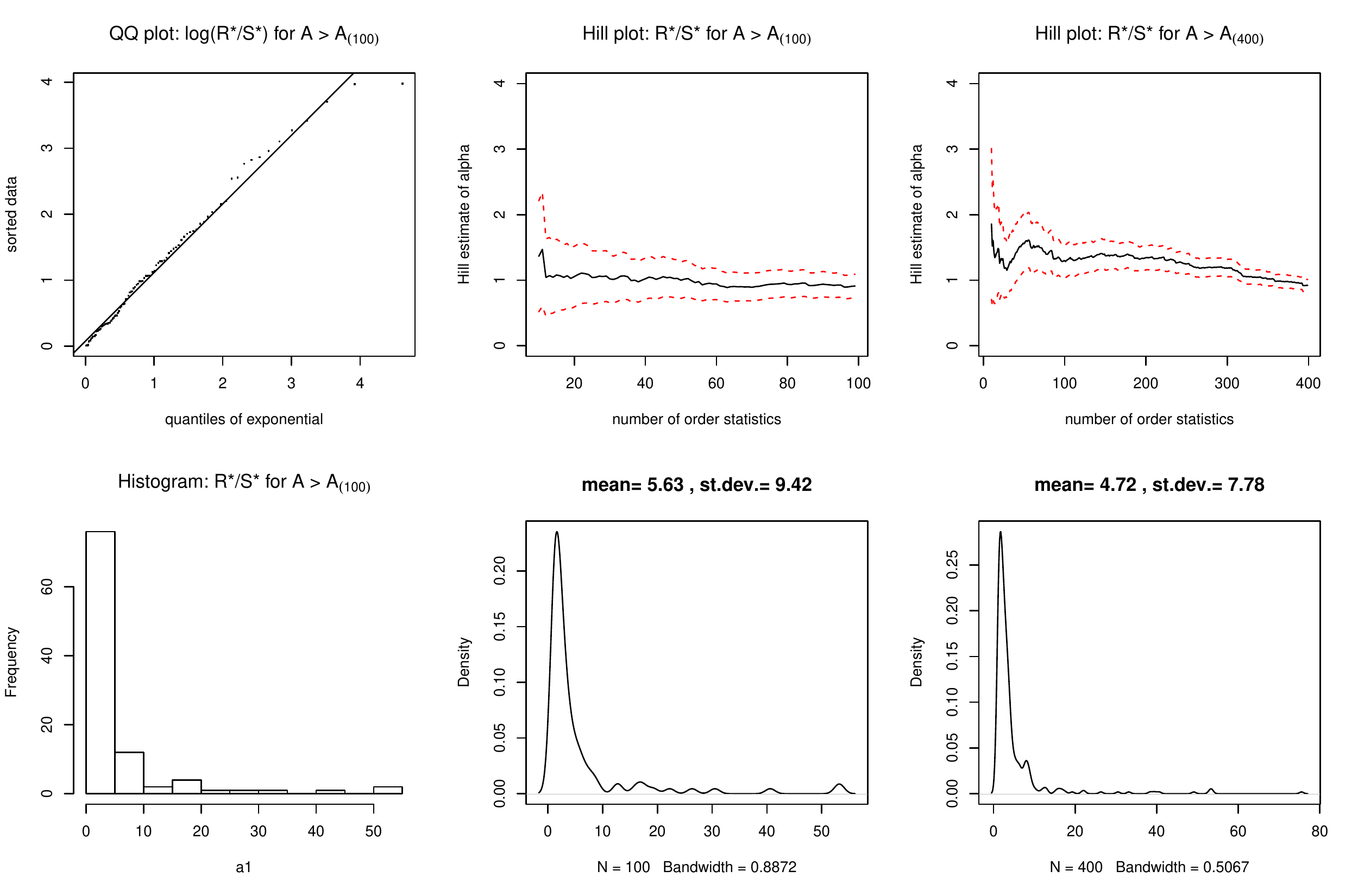}
\end{centering}
\caption{UNC HTTP responses. {\it Top:} QQ plot
of $\log(R^*/S^*)$ when $A_i>A_{(100)}$ and Hill plots of $R^*/S^*$
when $A_i>A_{(100)}$ and $A_i>A_{(400)}$. 
{\it Bottom:} Histogram of $R^*/S^*$ when $A_i>A_{(100)}$ and kernel density estimates of  $R^*/S^*$ for $A_i>A_{(100)}$ and $A_i>A_{(400)}.$ }\label{fig:25Apr_G2}
\end{figure}

So we have accumulated evidence that the data is consistent with an
HRV model on $\E_0$. Now we proceed to provide some estimates on the structure of the hidden angular measure,
which boils down to estimating three things

\begin{enumerate}
\item The proportion $q$ appearing in $\mu_0$ in Proposition \ref{prop:E0}: this can be estimated by 
\[ \hat{q}_k = \frac{1}{k} \sum_{i=1}^{21828} \bone_{\{S_i^*>R_i^*, A_i
  > A_{(k)}\}}, \quad k=2,\ldots, 21,828.\]
where $A_i= \min\{S^*_i,R^*_i\}$ and $A_{(1)} \ge A_{(2)} \ge \ldots$ form
order statistics from $A_i; 1\le i \le 21,828$ as in Figure \ref{fig:25Apr_p}. Looking at the plot for $k$ 
near zero, we can estimate $\hat{p}=0.55$.
\item The distribution of $\theta_1\sim G_1$: see Figure \ref{fig:25Apr_G1}. First we make a standard exponential QQ plot of $\log(S^*_i/R_i^*)$ when $A_{i} > A_{(100)}$.
This acts as a diagnostic for heavy-tails. This plot clearly indicates against heavy-tails as does a Hill plot of  $S^*_i/R_i^*$ when $A_{i} > A_{(100)}$. A histogram and kernel density estimate plot of  $(S^*_i/R_i^*)$ for $A_{i} > A_{(100)}$ points towards a light-tailed distribution.

\item The distribution of $\theta_2\sim G_2$: see Figure \ref{fig:25Apr_G2}. As before, first we make a standard exponential QQ plot of $\log(R^*_i/S_i^*)$ when $A_{i} > A_{(100)}$, and the points nicely hug a straight line which indicates presence of heavy-tails.
The Hill plots of  $R^*_i/S_i^*$ when $A_{i}> A_{(100)}$ and $A_{i}> A_{(400)}$ provide an estimate of the tail index to be between 1 and 1.5. The histograms and kernel density estimates seem to support that the distribution of $G_2$ is heavy-tailed.
\end{enumerate}

\end{exm}

 \section{Conclusion}
In this paper we have discussed different techniques to generate
models which exhibit both regular variation and hidden regular
variation. We have seen  some simulated examples  where we can estimate
the parameters of both MRV and HRV but there are also examples
 where it is difficult to correctly estimate
parameters. We  restricted ourselves to the two dimensional non-negative
orthant here, but clearly some of the generation
techniques can be extended to higher dimensions. Moreover, the
detection techniques for HRV on $\E_0$ using
the CEV model  can also be extended to detect HRV on other types of
cones especially in two dimensions but perhaps even more. Overall this
paper serves as a starting point for methods of generating and
detecting multivariate heavy tailed models having tail dependence
explained
by HRV. 


\bibliography{bibfile}
\end{document}